\newcommand{\R}{\mathbb{R}}
\newcommand{\N}{\mathbb{N}}
\newtheorem{teor}{Theorem}[section]
\newtheorem{defi}{Definition}[section]
\newtheorem{lema}{Lemma}[section]
\newtheorem{cor}{Corollary}[section]
\newcommand{\n}{\noindent}
\title{}
\author{}
\date{}
\begin{document}

\title{On the characterization of the Dirichlet and Fu\v cík spectra of\\ the one-dimensional anisotropic $p$-Laplace operator
\footnote{2020 Mathematics Subject Classification: 34B15; 34L05; 34L10; 34L15; 34L30}
\footnote{Key words: Anisotropic $p$-Laplacian, first eigenvalue, one-sided Poincaré inequality, Dirichlet spectrum}
}

\author{\textbf{Raul Fernandes Horta \footnote{\textit{E-mail addresses}: raul.fernandes.horta@gmail.com
 (R. F. Horta)}}\\ {\small\it Departamento de Matem\'{a}tica,
Universidade Federal de Minas Gerais,}\\ {\small\it Caixa Postal
702, 30123-970, Belo Horizonte, MG, Brazil}\\
\textbf{Marcos Montenegro \footnote{\textit{E-mail addresses}:
montene@mat.ufmg.br (M. Montenegro)}}\\ {\small\it Departamento de Matem\'{a}tica,
Universidade Federal de Minas Gerais,}\\ {\small\it Caixa Postal
702, 30123-970, Belo Horizonte, MG, Brazil}}

\date{}{

\maketitle

\markboth{abstract}{abstract}
\addcontentsline{toc}{chapter}{abstract}

\hrule \vspace{0,2cm}

\n {\bf Abstract}

The paper is concerned with the Dirichlet spectrum $\Lambda^{a,b}_p(0,L)$ of the anisotropic $p$-Laplace operator $- \Delta^{a,b}_{p}$ on an interval $(0,L)$ where

\[
\Delta^{a,b}_p u:= \left(a^{p}[(u')^{+}]^{p-1}-b^{p}[(u')^{-}]^{p-1}\right)',\footnote{Notation: $(u')^\pm = \max\{\pm u', 0\}$} \ \ a, b > 0.
\]
The set $\Lambda^{a,b}_p(0,L)$ and the respective eigenfunctions are completely characterized for $a \neq b$ in terms of the corresponding ones within the isotropic context. As an interesting application, we derive a new optimal Poincaré inequality that is stronger than the classical counterpart. The leading ideas are based on glue arguments of conveniently modified eigenfunctions and maximum type principles. More generally, our approach allows to characterize the Fu\v cík spectrum $\Sigma^{a,b}_p(0,L)$ of $- \Delta^{a,b}_{p}$ on $(0,L)$ and mainly the corresponding solutions. All results are novelty even for the nonlinear operator $\Delta^{a,b}_2$.

\vspace{0.5cm}
\hrule\vspace{0.2cm}

\section{Introduction and main theorems}

Quasilinear elliptic equations involving the $p$-Laplace operator arise in a variety of situations such as fluid flows through some types of porous media, singular solutions for the Emden-Fowler and Einstein-Yang-Mills equations, the Lane-Emden equations for equilibrium configurations of gaseous stellar objects, among many others, see for instance Chapter 14 of the book \cite{Ky}.

In this paper we carry out a complete study on the spectrum of such operators in dimension one involving asymmetric conductivity. More specifically, given numbers $p > 1$ and $a, b, L > 0$ with $a \neq b$, let $\Delta^{a,b}_p$ be the {\it anisotropic} $p$-Laplace operator defined weakly for functions $u \in W_0^{1,p}(0,L)$ as

\[
\langle \Delta^{a,b}_p u, \varphi \rangle = - \int_0^L \left(a^{p}[(u'(t))^{+}]^{p-1}-b^{p}[(u'(t))^{-}]^{p-1}\right) \varphi'(t)\, dt
\]
for every $\varphi \in W_0^{1,p}(0,L)$, in short

\[
\Delta^{a,b}_p u = \left(a^{p}[(u')^{+}]^{p-1}-b^{p}[(u')^{-}]^{p-1}\right)'.
\]

We consider the associated boundary value problems

\begin{equation} \label{DS}
\left\{
\begin{array}{lllllrlllr}
- \Delta^{a,b}_p u = \lambda \vert u \vert^{p-2} u & {\rm in} & (0, L), \\
u(0) = u(L) = 0
\end{array}\right.
\end{equation}
and

\begin{equation} \label{FS}
\left\{
\begin{array}{lllllrlllr}
- \Delta^{a,b}_p u = \mu (u^+)^{p-1} - \nu (u^-)^{p-1} & {\rm in} & (0, L), \\
u(0) = u(L) = 0,
\end{array}\right.
\end{equation}
where $\lambda, \mu, \nu \in \R$ and $u^\pm = \max\{\pm u, 0\}$.

Corresponding to \eqref{DS} and \eqref{FS}, we have the respective Dirichlet and Fu\v cík spectra:

\begin{eqnarray*}
&& \Lambda^{a,b}_p(0,L) := \{ \lambda \in \R:\ \eqref{DS}\ \text{admits a nontrivial weak solution}\, u \in W_0^{1,p}(0,L)\}, \\
&& \Sigma^{a,b}_p(0,L) := \{ (\mu, \nu) \in \R^2:\ \eqref{FS}\ \text{admits a nontrivial weak solution}\, u \in W_0^{1,p}(0,L)\}.
\end{eqnarray*}
The elements of $\Lambda^{a,b}_p(0,L)$ are called Dirichlet eigenvalues of $- \Delta^{a,b}_p$.

When $a = b$, the operator $\Delta^{a,a}_p$ coincides with $a^p \Delta_p$, where $\Delta_p u := \left(\vert u' \vert^{p-2} u'\right)'$ denotes the one-dimensional isotropic $p$-Laplace operator. Notice also that both sets $\Lambda_p(0,L):= a^{-p}\, \Lambda^{a,a}_p(0,L)$ and $\Sigma_p(0,L):= a^{-p}\, \Sigma^{a,a}_p(0,L)$ are entirely known. Precisely, they are given by

\begin{eqnarray*}
&& \bullet\ \ \Lambda_p(0,L) = \{ \lambda_{k,p}(0,L):\, k \geq 1\}, \\
&& \bullet\ \ \Sigma_p(0,L) = \left( \{ \lambda_{1,p}(0,L) \} \times \R \right) \cup \left( \R \times \{ \lambda_{1,p}(0,L) \} \right) \cup \Sigma^e_p(0,L) \cup \Sigma^{o,1}_p(0,L) \cup \Sigma^{o,2}_p(0,L),
\end{eqnarray*}
where

\[
\lambda_{k,p}(0,L) = (p-1) \left( \frac{k \pi_p}{L} \right)^p = k^p \lambda_{1,p}(0,L)
\]
with

\[
\pi_p := 2 \int_0^1 (1 - t^p)^{-1/p} dt
\]
and

\begin{eqnarray*}
&& \Sigma^e_p(0,L):= \{(\mu, \nu) \in \R_+^2:\, \mu^{-1/p} + \nu^{-1/p} = 2 \lambda_{2k,p}(0,L)^{-1/p},\ k \geq 1\},\\
&& \Sigma^{o,1}_p(0,L):= \{(\mu, \nu) \in \R_+^2:\, k \mu^{-1/p} + (k + 1) \nu^{-1/p} = (2k + 1) \lambda_{2k+1,p}(0,L)^{-1/p},\ k \geq 1\}, \\
&& \Sigma^{o,2}_p(0,L):= \{(\mu, \nu) \in \R_+^2:\, (k+1) \mu^{-1/p} + k \nu^{-1/p} = (2k + 1) \lambda_{2k+1,p}(0,L)^{-1/p},\ k \geq 1\}.
\end{eqnarray*}
For the one-dimensional Dirichlet spectrum of $- \Delta_p$ we refer to \cite{dPEM} and for the associated Fu\v cík spectrum to \cite{D,F} for $p = 2$ and to \cite{Dra} for $p \neq 2$.

As it is well known, there is a vast literature on miscellaneous problems connected to \eqref{DS} and \eqref{FS} in the case $a = b$, both in dimension 1 and higher dimensions. Most contributions can be found in a long list of old and new references. For some more closely related works, we refer to \cite{A, AT, AriC, AriCCuGo, BWZ, CudeFG, CuG, DD, deFG, dPDM, DraGT, DraS1, DraS2, DraS3, DraT, KL, LL, L, MaT, M, S, S1, T}.

On the other hand, fairly little is known about spectra of $- \Delta^{a,b}_p$ when $a \neq b$, even for $p = 2$ in which $\Delta^{a,b}_2$ is a nonlinear elliptic operator. Our main goal here is to determine explicitly both spectral sets as well as to characterize all corresponding solutions for any $p > 1$ and $a, b > 0$. In particular, we establish an one-to-one correspondence between the structures of the anisotropic and isotropic spectra, showing that many spectral properties are not affected by the asymmetry of the operator $\Delta^{a,b}_p$.

The bridge between the asymmetric and symmetric settings will be constructed by means of two fundamental ingredients:

\begin{itemize}
\item[(i)] The explicit value of the first Dirichlet eigenvalue of $-\Delta^{a,b}_p$ as well as the classification of its eigenfunctions (Theorem \ref{T1});
\item[(ii)] The simplicity of the zeroes of any nontrivial weak solution of \eqref{FS} (Theorem \ref{T4}).
\end{itemize}
Due to the presence of asymmetry, the proofs of (i) and (ii) demand alternative arguments adapted to the current context. Particularly, the approach of (i) involves the construction of some appropriate glues of modified eigenfunctions and the recurrent use of maximum type principles. In addition, it is worth mentioning that the explicit knowledge of the first Dirichlet eigenvalue in the isotropic case, as well as of the key properties satisfied by its eigenfunctions, plays an essential role in the proof of the ingredient (i).

A well-known property of the first Dirichlet eigenvalue $\lambda_{1,p}(0,L)$ of $- \Delta_p$ is its simplicity, that is, the set of all eigenfunctions associated to $\lambda_{1,p}(0,L)$ is generated by a positive eigenfunction $\varphi_p$:

\[
\{t \varphi_p:\, t \neq 0,\ \varphi_p \in C^1[0,L] \ \ {\rm and}\ \ \varphi_p > 0\ {\rm in}\ (0, L) \}.
\]
Note that this set is the union of two collinear half-lines in $C^1[0,L]$. Within the anisotropic environment, the appropriate notion of simplicity for Dirichlet eigenvalues of the operator $-\Delta^{a,b}_p$ is as follows:

\begin{defi} \label{simple}
A Dirichlet eigenvalue of $-\Delta^{a,b}_p$ is said to be simple in the generalized sense when $a \neq b$, if the set of corresponding eigenfunctions is the union of two non-collinear half-lines in $C^1[0,L]$, one generated by a positive function and other one by a negative function in $(0,L)$.
\end{defi}

For each $p > 1$ and $a, b, L > 0$, consider the number

\[
\lambda^{a,b}_{1,p}(0,L):= \inf \left\{ \int_0^L a^{p}[(u'(t))^{+}]^{p} + b^{p}[(u'(t))^{-}]^{p}\, dt:\ u \in W_0^{1,p}(0,L),\ \Vert u \Vert_p = 1 \right\}.
\]
From the fact that $\lambda_{1,p}(0,L)$ is positive, it is clear that $\lambda^{a,b}_{1,p}(0,L)$ is also positive and, in addition, it follows by direct minimization, with the aid of the Poincaré inequality, that the infimum is attained for some $u \in W_0^{1,p}(0,L)$. In particular, $\lambda^{a,b}_{1,p}(0,L)$ is the smallest Dirichlet eigenvalue of $-\Delta^{a,b}_p$ and $u$ is a corresponding eigenfunction. Furthermore, by definition, $\lambda^{a,b}_{1,p}(0,L)$ is the best constant of the asymmetric Poincaré inequality

\[
\lambda^{a,b}_{1,p}(0,L) \int_0^L \vert u(t) \vert^p\, dt \leq \int_0^L a^{p}[(u'(t))^{+}]^{p} + b^{p}[(u'(t))^{-}]^{p}\, dt
\]
for every $u \in W_0^{1,p}(0,L)$. Under this point of view, the heuristic intuition based on similarity of roles played by the terms $(u')^{+}$ and $(u')^{+}$ perhaps would suggest that the value of $\lambda^{a,b}_{1,p}(0,L)$ should be

\[
\displaystyle \left( \frac{a^p + b^p}{2}\right) \lambda_{1,p}(0,L).
\]
However, its computation does not seem trivial by far and, to our surprise, it is strictly smaller than the one mentioned above. Part of our first theorem exhibits the precise value of $\lambda^{a,b}_{1,p}(0,L)$.

\begin{teor} \label{T1}
Let $p > 1$ and $L > 0$. For any $a, b > 0$, we have:

\begin{itemize}
\item[(I)] $\lambda_{1,p}^{a,b}(0,L) = \displaystyle \left(\frac{a+b}{2}\right)^{p}\lambda_{1,p}(0,L)$;
\item[(II)] $u \in W_0^{1,p}(0,L)$ is an eigenfunction of $-\Delta^{a,b}_p$ associated to $\lambda^{a,b}_{1,p}(0,L)$ if and only if either

\[
u(t) = c\varphi_{p, L}^{a,b,+}(t) := c\varphi_{p}^{a,b,+}\left(\frac{t}{L}\right)\ \ {\rm or}\ \ u(t) = c\varphi_{p, L}^{a,b,-}(t) := c\varphi_{p}^{a,b,-}\left(\frac{t}{L}\right)
\]
for some constant $c>0$, where

\begin{equation*}
\varphi_{p}^{a,b,+}(t) =
\begin{cases}
\begin{aligned}
     & \varphi_{p}\left(\frac{t}{2t_{0}}\right)\ \ \text{if} \ t \in (0,t_{0}), \\
     & \varphi_{p}\left(\frac{t + 1 - 2 t_{0}}{2(1-t_{0})}\right)\ \ \text{if} \ t \in [t_{0},1),
\end{aligned}
\end{cases}
\end{equation*}
\begin{equation*}
\varphi_{p}^{a,b,-}(t) =
\begin{cases}
\begin{aligned}
     & -\varphi_{p}\left(\frac{t}{2(1 - t_{0})}\right)\ \ \text{if} \ t \in (0,1 - t_{0}), \\
     & -\varphi_{p}\left(\frac{t + 2 t_{0} - 1}{2 t_{0}}\right)\ \ \text{if} \ t \in [1 - t_{0},1).
\end{aligned}
\end{cases}
\end{equation*}
Here, $\varphi_{p} \in W^{1,p}_0(0,1)$ denotes the principal eigenfunction associated to $\lambda_{1,p}(0,1)$ normalized by $\Vert \varphi_{p} \Vert_p = 1$ and

\[
t_{0} = \frac{a}{a+b}.
\]
In particular, $\lambda_{1,p}^{a,b}(0,L)$ is an simple eigenvalue in the generalized sense, in other words, the eigenspace of $\lambda^{a,b}_{1,p}(0,L)$ is formed by two non-collinear half-lines if and only if $a \neq b$.
\end{itemize}
\end{teor}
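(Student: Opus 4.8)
The plan is to reduce the asymmetric minimization problem to the classical one by a change of variables that absorbs the asymmetry. Given any $u\in W_0^{1,p}(0,L)$ with $\|u\|_p=1$, I would argue that the minimizer may be taken to have definite sign. Indeed, if a minimizer $u$ changes sign, then on each nodal interval $u$ restricted there would have to be an eigenfunction of the same type on a subinterval, and a standard rescaling/comparison argument (using that the classical first eigenvalue scales like $L^{-p}$ together with strict subadditivity of $t\mapsto t^{1-1/p}$ type estimates, or simply that $|u|$ is an admissible competitor with the same energy but whose nodal structure forces a contradiction with minimality unless $u$ does not change sign) shows this is impossible. So assume $u>0$ in $(0,L)$ and let $t_{\max}\in(0,L)$ be a point where $u$ attains its maximum; on $(0,t_{\max})$ we have $u'\ge 0$ a.e.\ on the ascending part and $u'\le 0$ on the descending part — more precisely, since $u$ is an eigenfunction it is $C^1$, strictly increasing then strictly decreasing, so $(u')^+$ is supported on $(0,t_{\max})$ and $(u')^-$ on $(t_{\max},L)$.

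Next I would split the Rayleigh quotient accordingly:
\[
\int_0^L a^p[(u')^+]^p+b^p[(u')^-]^p\,dt = a^p\int_0^{t_{\max}} |u'|^p\,dt + b^p\int_{t_{\max}}^L |u'|^p\,dt.
\]
On $(0,t_{\max})$, $u$ is a positive function vanishing at $0$, and on $(t_{\max},L)$ it is positive vanishing at $L$; the natural move is to rescale each piece to a common interval. Write $m = \int_0^{t_{\max}} u^p$ and $1-m = \int_{t_{\max}}^L u^p$. By the classical one-dimensional Poincaré/eigenvalue inequality applied on $(0,t_{\max})$ with the extra constraint that $u$ is monotone (equivalently, reflecting $u$ about $t_{\max}$ to get a function on $(0,2t_{\max})$ that is a legitimate competitor for $\lambda_{1,p}(0,2t_{\max})$), one gets $\int_0^{t_{\max}}|u'|^p \ge \tfrac12\lambda_{1,p}(0,2t_{\max})\,m = \tfrac12\lambda_{1,p}(0,1)(2t_{\max})^{-p} L^{?}\cdot$\ldots — after restoring the scaling in $L$, $\int_0^{t_{\max}}|u'|^p\ge \frac{1}{2}\lambda_{1,p}(0,1)\big(\frac{L}{2t_{\max}}\big)^{p}\cdot\frac{2t_{\max}}{L}\,m$, and similarly for the other piece with $t_{\max}$ replaced by $L-t_{\max}$. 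Adding, the asymmetric energy is bounded below by an expression of the form
\[
\tfrac12\lambda_{1,p}(0,1)L^{-p}\Big[ a^p (2t_{\max})^{1-p}\,m\,2^{?} + \cdots\Big],
\]
which, after collecting constants, becomes $\tfrac12\lambda_{1,p}(0,1)\big[a^p s^{1-p}m + b^p(1-s)^{1-p}(1-m)\big]$ with $s=t_{\max}/L$ (up to a uniform power of $2$ and $L$ I will track carefully). Then I minimize this two-variable lower bound over $s\in(0,1)$ and $m\in[0,1]$.

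The minimization over $m$ for fixed $s$ is linear, hence attained at an endpoint unless the two coefficients coincide; the minimization over $s$ of $\min\{a^p s^{1-p}, b^p(1-s)^{1-p}\}$ is then a one-variable calculus problem whose optimum, by the exponent $1-p<0$, is pushed to the balance point where $a^p s^{1-p}=b^p(1-s)^{1-p}$, i.e.\ $a\,s^{-1}\cdot s^{(p)/\ldots}$ — solving $a^{p}s^{1-p}=b^{p}(1-s)^{1-p}$ gives $a^{p/(p-1)}(1-s)=b^{p/(p-1)}s$ \ldots actually the clean outcome is $s/(1-s)=a/b$, hence $s=t_0=a/(a+b)$, and plugging back yields exactly $\big(\tfrac{a+b}{2}\big)^p\lambda_{1,p}(0,L)$, proving (I) as the lower bound; the function $\varphi^{a,b,+}_{p,L}$ is the competitor that makes every inequality an equality (its ascending part is $\varphi_p$ rescaled from $(0,1)$ onto $(0,t_0 L)$ stretched by factor $2t_0$, the descending part symmetrically), giving the matching upper bound and exhibiting the minimizer. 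The reverse asymmetry between the increasing and decreasing parts produces the second family $\varphi^{a,b,-}_{p,L}$ when one instead takes $u<0$; equality analysis in the chain of inequalities (strict convexity of $t\mapsto t^p$, equality case in the classical eigenvalue inequality which forces $u$ to be the classical principal eigenfunction on each rescaled piece) shows these are the only minimizers, giving (II).

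Finally, the concluding assertion of Theorem~\ref{T1} is essentially immediate once (I) and (II) are in hand: the eigenspace is $\{c\varphi^{a,b,+}_{p,L}:c>0\}\cup\{c\varphi^{a,b,-}_{p,L}:c>0\}$, two half-lines in $C^1[0,L]$, and I would observe that $\varphi^{a,b,+}_{p,L}$ and $-\varphi^{a,b,-}_{p,L}$ are collinear precisely when their ``breakpoints'' agree, i.e.\ when $t_0=1-t_0$, that is $a=b$ — indeed for $a=b$ one has $t_0=1/2$ and both reduce to $\varphi_{p}(t/L)$, recovering classical simplicity, whereas for $a\ne b$ the two profiles have their peaks at distinct points $t_0L\ne(1-t_0)L$ and cannot be scalar multiples of one another. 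Since $\Delta^{a,b}_p=c\,\Delta_p$ forces $a=b$ (compare the action on increasing versus decreasing test functions), the last clause follows. I expect the main obstacle to be the bookkeeping in the chain of rescaling inequalities — getting the powers of $2$ and $L$ exactly right and, more substantively, proving rigorously that a sign-changing $u$ cannot be a minimizer and that equality in the classical eigenvalue inequality on each monotone piece forces $u$ to coincide (after rescaling) with $\varphi_p$; the latter uses simplicity of $\lambda_{1,p}$ together with the fact that a positive function on $(0,L)$ with a single interior critical point, restricted to each side, reflects to a principal eigenfunction on the doubled interval, so equality propagates to the unique (up to scaling) classical minimizer.
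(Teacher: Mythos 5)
Your upper bound (the explicit competitor $\varphi^{a,b,+}_{p,L}$, whose energy computes to $\bigl(\tfrac{a+b}{2}\bigr)^{p}\lambda_{1,p}(0,L)$) and the final non-collinearity observation (peaks at $t_0L$ versus $(1-t_0)L$) are correct and agree with the paper. The gap is in the lower bound for (I). Writing $E(u):=\int_0^L a^{p}[(u')^{+}]^{p}+b^{p}[(u')^{-}]^{p}\,dt$, splitting at the peak $s=t_{\max}/L$ and applying the reflected Poincar\'e inequality on each side gives
\[
E(u)\;\geq\;\frac{\lambda_{1,p}(0,L)}{2^{p}}\Bigl[\frac{a^{p}}{s^{p}}\,m+\frac{b^{p}}{(1-s)^{p}}\,(1-m)\Bigr]
\]
(the correct exponent is $-p$, not $1-p$; the reflection doubles both the energy and the mass, so no extra factor survives). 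You then propose to minimize this freely over $s\in(0,1)$ and $m\in[0,1]$. But that free minimum equals $\min\{a,b\}^{p}\,2^{-p}\lambda_{1,p}(0,L)$ (send $m\to1$, $s\to1$ when $a\le b$), which is \emph{strictly below} the target $\bigl(\tfrac{a+b}{2}\bigr)^{p}\lambda_{1,p}(0,L)$ for every $a,b$ --- even for $a=b$ the decoupled bound loses a factor $2^{p}$. The balance point $s=a/(a+b)$ you land on is where the \emph{maximum} of $s\mapsto\min\{a^{p}s^{-p},\,b^{p}(1-s)^{-p}\}$ occurs, not the minimum, so the optimization as stated does not produce the claimed constant. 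The bound is simply not jointly sharp in the free parameters $(s,m)$: to close the argument one must couple the two halves at the peak. This is exactly what the paper's Lemma 2.1 supplies --- each half of the minimizer, reflected about a critical point, solves the \emph{classical} $p$-Laplace eigenvalue problem and is positive, hence is a principal eigenfunction, forcing $a^{p}(2s)^{-p}\lambda_{1,p}(0,1)=\lambda^{a,b}_{1,p}(0,1)=b^{p}(2(1-s))^{-p}\lambda_{1,p}(0,1)$ and therefore $s=a/(a+b)$. That matching identity is the missing ingredient, not a bookkeeping issue.

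Two further steps are asserted rather than proved. First, you take for granted that the minimizer is strictly increasing then strictly decreasing; a priori it could have several interior critical points (or a plateau) between the first and the last, and ruling this out is precisely the $t_0=t_1$ step that occupies much of the paper's proof (reflection lemma on each monotone end, plus comparison with the explicit competitor to reach a contradiction). Second, the sign-definiteness of the minimizer cannot be obtained from ``$|u|$ has the same energy'': replacing $u$ by $|u|$ swaps $(u')^{+}$ and $(u')^{-}$ on $\{u<0\}$, so the asymmetric energy changes whenever $a\neq b$. The workable route --- which you mention only in passing --- is the nodal-domain decomposition combined with strict monotonicity of $\lambda^{a,b}_{1,p}$ under domain inclusion, as in the paper. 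With these three points repaired (matching condition at the peak, uniqueness of the critical point, sign-definiteness), your equality analysis for (II) via simplicity of the classical first eigenvalue does go through.
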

This result will play an important role in the complete characterization of the Dirichlet and Fu\v cík spectra of $-\Delta^{a,b}_p$ and corresponding solutions for any $a, b > 0$. It also implies the following one-sided Poincaré inequalities by letting $a \rightarrow 0$ with $b = 1$ (or $b \rightarrow 0$ with $a = 1$) in the asymmetric Poincaré inequality:

\begin{cor} \label{C1}
Let $p > 1$ and $L > 0$. It holds that

\[
2^{-p} \lambda_{1,p}(0,L) \int_0^L \vert u(t) \vert^p\, dt \leq \int_0^L [(u'(t))^{\pm}]^{p}\, dt
\]
for every $u \in W_0^{1,p}(0,L)$ and, moreover, both inequalities are optimal.
\end{cor}

\begin{proof}
It suffices to show optimality for the inequality with the sign $+$ since $(-u')^{+} = (u')^{-}$. Let $\lambda^+_{1,p}(0,L)$ be the associated best constant. From the above inequality, clearly $\lambda^+_{1,p}(0,L) \geq 2^{-p} \lambda_{1,p}(0,L)$. On the other hand, for an extremal function $u_b \in W_0^{1,p}(0,L)$ of $\lambda_{1,p}^{1,b}(0,L)$ normalized by $\Vert u_b \Vert_p = 1$ where $b > 0$, we have

\[
2^{-p} \lambda_{1,p}(0,L) \leq \int_0^L [(u'_b(t))^{+}]^{p}\, dt \leq \int_0^L [(u'_b(t))^{+}]^{p} + b^{p}[(u'_b(t))^{-}]^{p}\, dt = \lambda_{1,p}^{1,b}(0,L).
\]
Letting now $b \rightarrow 0$ and using Theorem \ref{T1}, we get

\[
2^{-p} \lambda_{1,p}(0,L) \leq \liminf_{b \rightarrow 0} \int_0^L [(u'_b(t))^{+}]^{p}\, dt \leq \limsup_{b \rightarrow 0} \int_0^L [(u'_b(t))^{+}]^{p}\, dt = 2^{-p} \lambda_{1,p}(0,L),
\]
so that $\lambda^+_{1,p}(0,L) = 2^{-p} \lambda_{1,p}(0,L)$.
\end{proof}

A natural and probably delicate question is whether the stronger inequalities in Corollary \ref{C1} admit any extremal functions. Although this is an open problem, it seems reasonable to expect that no extremal function exists in the space $W_0^{1,p}(0,L)$, once the function $u(t):= \varphi_p(t/2)$ for $t \in [0,L]$ clearly don't belong to $W_0^{1,p}(0,L)$ and, due to the symmetry of $\varphi_p$, satisfies the equality

\[
\int_0^L [(u'(t))^{+}]^{p}\, dt = \int_0^L 2^{-p} \varphi'_p(t/2)^{p}\, dt = 2^{1-p} \int_0^{L/2} \varphi'_p(t)^{p}\, dt = 2^{-p} \int_0^{L} \varphi'_p(t)^{p}\, dt
\]

\[
= 2^{-p} \lambda_{1,p}(0,L) \int_0^L \vert \varphi_p(t) \vert^p\, dt = 2^{1-p} \lambda_{1,p}(0,L) \int_0^{L/2} \vert \varphi_p(t) \vert^p\, dt = 2^{-p} \lambda_{1,p}(0,L) \int_0^L \vert u(t) \vert^p\, dt.
\]

In order to state two other important consequences of the first theorem, consider the function $\phi_{p}^{a,b} \colon \mathbb{R} \to \mathbb{R}$ given by
\begin{equation*}
\phi_{p}^{a,b}(t) =
\begin{cases}
\begin{aligned}
     &\varphi_{p}^{a,b,+}(t)\ \ \text{if} \ t \in [0,1), \\
     &\varphi_{p}^{a,b,-}(t-1)\ \ \text{if} \ t \in [1,2]
\end{aligned}
\end{cases}
\end{equation*}
and extended periodically as $\phi_{p}^{a,b}(t+2) = \phi_{p}^{a,b}(t)$ for every $t \in \mathbb{R}$, where $\varphi_{p}^{a,b,+}$ and $\varphi_{p}^{a,b,-}$ are defined in Theorem~\ref{T1}.

\begin{teor} \label{T2}
Let $p > 1$ and $L > 0$. For any $a, b > 0$, we have:

\begin{itemize}
\item[(I)] $\Lambda^{a,b}_p(0,L) = \displaystyle \left( \frac{a + b}{2} \right)^p \Lambda_p(0,L)$, which means that

\[
\Lambda^{a,b}_p(0,L) = \left\{\lambda^{a,b}_{k,p}(0,L) := \left( \frac{k(a + b)}{2} \right)^p \lambda_{1,p}(0,L):\ k \geq 1 \right\};
\]
\item[(II)] $u \in W_0^{1,p}(0,L)$ is an eigenfunction of $-\Delta^{a,b}_p$ associated to $\lambda^{a,b}_{k,p}(0,L)$ if and only if either $u(t) = c \phi_{k,p,L}^{a,b}(t) := c\phi_{p}^{a,b}\left(\frac{kt}{L}\right)$ or $u(t) = c \phi_{k,p,L}^{a,b}(t) := c\phi_{p}^{a,b}\left(\frac{kt}{L}-1\right)$ for some constant $c > 0$.
\end{itemize}
\end{teor}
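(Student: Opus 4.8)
The plan is to reduce Theorem~\ref{T2} entirely to Theorem~\ref{T1} by a nodal‑interval analysis, exploiting two elementary invariances of problem~\eqref{DS}: the scaling covariance $\Delta^{a,b}_p\big(u(\alpha\,\cdot)\big)=\alpha^{p}\,(\Delta^{a,b}_p u)(\alpha\,\cdot)$ for $\alpha>0$, and the reflection covariance, whereby $w(t)=-u(\ell-t)$ solves the same equation on $(0,\ell)$ whenever $u$ does (this is what makes $\varphi_{p}^{a,b,-}$ the $C^{1}$‑compatible continuation of $\varphi_{p}^{a,b,+}$). Throughout I would use the classical facts that the principal eigenfunction $\varphi_{p}$ of $-\Delta_{p}$ on $(0,1)$ is positive, of class $C^{1}$, symmetric about $1/2$, with $\varphi_{p}'(1/2)=0$ and $\varphi_{p}'(1^-)=-\varphi_{p}'(0^+)\neq0$.

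\textbf{Regularity and the shape on a nodal interval.} Let $u$ be a nontrivial solution of~\eqref{DS} with eigenvalue $\lambda$; testing with $u$ gives $\int_0^L a^{p}[(u')^{+}]^{p}+b^{p}[(u')^{-}]^{p}=\lambda\Vert u\Vert_p^{p}$, hence $\lambda>0$. Since $u\in C[0,L]$, the equation forces the flux $g(u'):=a^{p}[(u')^{+}]^{p-1}-b^{p}[(u')^{-}]^{p-1}$ to be an antiderivative of the continuous function $-\lambda|u|^{p-2}u$; as $g\colon\R\to\R$ is a strictly increasing homeomorphism, $u'=g^{-1}(g(u'))$ is continuous and $u\in C^{1}[0,L]$. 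On each connected component $(\alpha,\beta)$ of $\{u\neq0\}$ the restriction $u|_{(\alpha,\beta)}$ is a sign‑definite eigenfunction vanishing at the endpoints, and integrating the ODE by quadrature on its two monotone branches (equivalently, invoking the classification of sign‑definite first eigenfunctions in Theorem~\ref{T1}) I would obtain that $u|_{(\alpha,\beta)}$ is a rescaled copy of $\pm\varphi_{p}$ with interior extremum at relative position $t_{0}=a/(a+b)$ and, crucially, that $\lambda=\big(\tfrac{a+b}{2}\big)^{p}\lambda_{1,p}(\alpha,\beta)=\big(\tfrac{a+b}{2}\big)^{p}(p-1)\big(\tfrac{\pi_{p}}{\beta-\alpha}\big)^{p}$. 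Thus all components share the same length $\ell=\ell(\lambda)>0$, so there are finitely many of them.

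\textbf{The nodal set tiles $(0,L)$; both inclusions and part (II).} Next I would rule out $u$ vanishing on an interval of positive length: if $[c,d]$ were a maximal such interval with $d<L$, no component could accumulate at $d^{+}$ (each has length $\ell$), so some component $(d,d+\ell)$ would start exactly at $d$; but there $u$ is a rescaled copy of $\pm\varphi_{p}$, so $u'(d^{+})\propto\varphi_{p}'(0^{+})\neq0$, contradicting $u'(d^{-})=0$ (the case $d=L$ is symmetric). Hence consecutive components share an endpoint, the first starts at $0$, the last ends at $L$, so the components are exactly $((i-1)\ell,i\ell)$, $i=1,\dots,k$, with $k\ell=L$; therefore $\ell=L/k$ and $\lambda=\big(\tfrac{k(a+b)}{2}\big)^{p}\lambda_{1,p}(0,L)=\lambda^{a,b}_{k,p}(0,L)$, giving $\Lambda^{a,b}_p(0,L)\subseteq\{\lambda^{a,b}_{k,p}(0,L):k\in\N\}$. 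For the reverse inclusion I would verify that $\phi_{p}^{a,b}\in C^{1}(\R)$ and solves $-\Delta^{a,b}_p\phi_{p}^{a,b}=\lambda^{a,b}_{1,p}(0,1)\,|\phi_{p}^{a,b}|^{p-2}\phi_{p}^{a,b}$ on $\R$: continuity is clear, the interior junctions $t_{0},1-t_{0}$ are $C^{1}$ because $\varphi_{p}'(1/2)=0$, the integer junctions because $\varphi_{p}'(1^-)=-\varphi_{p}'(0^+)$, and the equation — with the factor $\big(\tfrac{a+b}{2}\big)^{p}$ produced by the rescalings by $2t_{0}$ and $2(1-t_{0})$ — is the computation already carried out in the proof of Theorem~\ref{T1}. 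Then $\phi^{a,b}_{k,p,L}(t)=\phi_{p}^{a,b}(kt/L)$ lies in $C^{1}[0,L]\subset W_0^{1,p}(0,L)$, vanishes at $0$ and $L$ (integers are zeros of $\phi_{p}^{a,b}$), and by scaling with $\alpha=k/L$ solves~\eqref{DS} with eigenvalue $(k/L)^{p}\lambda^{a,b}_{1,p}(0,1)=\lambda^{a,b}_{k,p}(0,L)$; likewise for $\phi_{p}^{a,b}(kt/L-1)$. This proves part~(I) and one direction of part~(II). For the other direction, given an eigenfunction $u$ for $\lambda^{a,b}_{k,p}(0,L)$, the previous steps show it has exactly the $k$ nodal intervals $((i-1)L/k,iL/k)$; translating Theorem~\ref{T1}(II) to each, $u$ equals $c_{i}\varphi_{p,L/k}^{a,b,+}$ or $c_{i}\varphi_{p,L/k}^{a,b,-}$ there (suitably shifted), with $c_{i}>0$ and alternating signs since $u\in C^{1}$ crosses each interior zero with nonzero derivative; imposing $C^{1}$ at a node makes the common factor cancel and $\varphi_{p}'(1^-)=-\varphi_{p}'(0^+)\neq0$ forces $c_{i}=c_{i+1}$, so all $c_{i}$ agree and $u=c_{1}\phi^{a,b}_{k,p,L}$ or $u=c_{1}\,\phi_{p}^{a,b}(k\,\cdot/L-1)$ according to the sign of $u$ near $0$.

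\textbf{Main obstacle.} The substantive point is the nodal analysis of the second step: proving that the zero set of an eigenfunction is finite, that the nodal intervals share a common length forced by $\lambda$, and that they partition $(0,L)$ — equivalently, that an eigenfunction vanishes nowhere on a set of positive measure and has only simple interior zeros. Once this rigidity is secured, everything reduces to gluing translated and rescaled instances of Theorem~\ref{T1} along the $C^{1}$ condition, and the bookkeeping of the constant $\big(\tfrac{a+b}{2}\big)^{p}$ is routine. Should Theorem~\ref{T4} be available, finiteness of the zero set is immediate and this step shortens, though it is not logically required for Theorem~\ref{T2}.
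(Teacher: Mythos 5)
Your proposal is correct and, at bottom, runs on the same engine as the paper: the paper never proves Theorem~\ref{T2} separately but obtains it as the diagonal case $\mu=\nu$ of Theorem~\ref{T3}, whose proof is exactly your scheme --- decompose into nodal intervals, show each carries a principal eigenfunction of $-\Delta_p^{a,b}$ so that all intervals have the common length $\ell(\lambda)$, show they tile $[0,L]$, and glue translated copies of $\varphi_p^{a,b,\pm}$ via the $C^1$ matching at the simple interior zeros. The genuine difference is in how the two key rigidity facts are justified. For ``sign-definite on a nodal interval $\Rightarrow$ principal eigenvalue there,'' the paper uses an asymmetric Picone identity (Lemma~\ref{L3} feeding Lemma~\ref{L2}), whereas you integrate the first-order energy identity $\tfrac{p-1}{p}\bigl(a^p[(u')^+]^p+b^p[(u')^-]^p\bigr)+\tfrac{\lambda}{p}|u|^p=\mathrm{const}$ by quadrature on the two monotone branches; your route is more elementary and self-contained, the paper's generalizes better beyond one dimension. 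One caveat: your parenthetical ``equivalently, invoking the classification in Theorem~\ref{T1}'' is not actually equivalent --- Theorem~\ref{T1} classifies eigenfunctions \emph{of} $\lambda_{1,p}^{a,b}$ and does not by itself exclude a positive eigenfunction for a higher eigenvalue, so you do need either the quadrature argument you sketch or the Picone-type Lemma~\ref{L2}. For the tiling, the paper's Theorem~\ref{T4} propagates Hopf's lemma inductively from one nodal interval to the next, while you rule out a fat zero set by the clean observation that a maximal zero interval would force $u'=0$ at an endpoint where the adjacent rescaled $\varphi_p^{a,b,\pm}$ has nonvanishing derivative; both work, and since the components of $\{u\neq0\}$ all have length exactly $\ell>0$ there is no pathological (Cantor-type) zero set to worry about. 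The remaining steps (the $C^1$ junctions at $t_0$, $1-t_0$ and at the integers, the forced alternation of signs, and $c_i=c_{i+1}$ from matching derivatives proportional to $\varphi_p'(1)=-\varphi_p'(0)\neq0$) check out.
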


More generally, we have the following extension of Theorem \ref{T2}:

\begin{teor} \label{T3}
Let $p > 1$ and $L > 0$. For any $a, b > 0$, we have:

\begin{itemize}
\item[(I)] $\Sigma^{a,b}_p(0,L) = \displaystyle \left( \frac{a + b}{2} \right)^p \Sigma_p(0,L)$;

\item[(II)] $u \in W_0^{1,p}(0,L)$ is a nontrivial weak solution of \eqref{FS} corresponding to a couple $(\mu, \nu) \in \Sigma^{a,b}_p(0,L)$ if and only if there are positive integers $P = P(\mu, \nu)$ and $N = N(\mu, \nu)$ and an interspersed cover of $[0,L]$ by open intervals $\{I_i\}^P_{i=1}$ and $\{J_j\}^N_{j=1}$ of measures $\vert I_i \vert = l_\mu$ and $\vert J_j \vert = l_\nu$, with

\[
l_\mu := \left( \frac{\lambda^{a,b}_{1,p}(0,1)}{\mu} \right)^{1/p}\ \ {\rm and}\ \ \ l_\nu := \left( \frac{\lambda^{a,b}_{1,p}(0,1)}{\nu} \right)^{1/p},
\]
such that $u(t) = c\Phi_{(\mu, \nu), p, L}^{a,b}\left( t \right)$ for some constant $c>0$, where

\begin{equation*}
\Phi_{(\mu, \nu), p, L}^{a,b}\left( t \right) =
\begin{cases}
\begin{aligned}
     & l_\mu\, \varphi_{p}^{a,b,+} \circ \gamma_i(t)\ \ \text{if} \ t \in \bar{I}_i, \\
     & l_\nu\,  \varphi_{p}^{a,b,-} \circ \pi_j(t)\ \ \text{if} \ t \in \bar{J}_j,
\end{aligned}
\end{cases}
\end{equation*}
where $\gamma_i$ and $\pi_j$ represent the increasing affine bijections from $\bar{I}_i$ and $\bar{J}_j$ onto $[0,1]$, respectively. Here, for an interspersed cover $\{I_i\}^P_{i=1}$ and $\{J_j\}^N_{j=1}$ of $[0,L]$ by open intervals, we mean that

\begin{itemize}
\item[(a)] $\bar{I}_i \cap \bar{I}_j = \emptyset = \bar{J}_i \cap \bar{J}_j$ for any $i \neq j$;
\item[(b)] $I_i \cap J_j = \emptyset$ for any $i, j$;
\item[(c)] $I_i \leq I_{i + 1}$ and $J_j \leq J_{j + 1}$ for any $i, j$;
\item[(d)] $\left( \cup^P_{i = 1} \bar{I}_i \right) \cup \left( \cup^N_{j = 1} \bar{J}_j \right) = [0, L]$,
\end{itemize}
where the notation $A \leq B$ means that $x \leq y$ for any $x \in A$ and $y \in B$.
\end{itemize}
\end{teor}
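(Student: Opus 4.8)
The plan is to reduce any nontrivial weak solution of \eqref{FS} to a \emph{finite}, sign-alternating chain of rescaled copies of the principal eigenfunctions $\varphi_p^{a,b,+}$ and $\varphi_p^{a,b,-}$ furnished by Theorem \ref{T1}, and then to match the resulting combinatorial data against the known description of $\Sigma_p(0,L)$. Two preliminary observations drive everything. First, the reflection identity $\Delta_p^{a,b}(-v)=-\Delta_p^{b,a}v$, immediate from $((-v)')^{\pm}=(v')^{\mp}$: it turns $-\Delta_p^{a,b}u=-\nu(u^-)^{p-1}$ on a set where $u<0$ into $-\Delta_p^{b,a}(-u)=\nu(-u)^{p-1}$, and since $\tfrac{a+b}{2}=\tfrac{b+a}{2}$, Theorem \ref{T1} applies to positivity and negativity intervals alike. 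Second, the scaling $\lambda_{1,p}^{a,b}(0,\ell)=\big(\tfrac{a+b}{2}\big)^{p}\lambda_{1,p}(0,1)\,\ell^{-p}$, which shows that an interval on which $u$ is a positive first eigenfunction of $-\Delta_p^{a,b}$ with eigenvalue $\mu$ must have length exactly $l_\mu$, and likewise $l_\nu$ for negativity intervals; moreover $l_\mu$ and $l_\nu$ coincide with the lengths of the positive and negative humps of the classical Fu\v{c}ík solutions attached to $(\mu',\nu'):=\big(\tfrac{2}{a+b}\big)^{p}(\mu,\nu)$.

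For the ``only if'' part of (II), let $u\not\equiv 0$ solve \eqref{FS} for $(\mu,\nu)$. By Theorem \ref{T4} all zeros of $u$ are simple, hence finite in number, so $0=z_0<z_1<\dots<z_m=L$ and $u$ keeps a constant sign on each $(z_{k-1},z_k)$, strictly alternating because $u'(z_k)\neq 0$ at every interior zero (and $u'(0^+)\neq 0\neq u'(L^-)$ by ODE uniqueness). On a positivity interval $u$ is a positive solution of $-\Delta_p^{a,b}u=\mu u^{p-1}$ with zero Dirichlet data; the quadrature/shooting analysis behind Theorem \ref{T1} then forces $\mu>0$, forces that interval to have length $l_\mu$, and forces $u$ there to be a positive multiple of $\varphi_p^{a,b,+}$ affinely rescaled to the interval — and symmetrically, via the reflection identity, for negativity intervals with $\nu$, $l_\nu$, $\varphi_p^{a,b,-}$. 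Writing $A_i$ for the constant on the $i$-th positivity interval and $B_j$ for that on the $j$-th negativity interval, the flux $a^{p}[(u')^{+}]^{p-1}-b^{p}[(u')^{-}]^{p-1}$ is a primitive of the continuous right-hand side of \eqref{FS}, hence continuous on $[0,L]$; imposing this continuity at each interior zero — checking separately the configurations $+\!\to\!-$ and $-\!\to\!+$, where the break points $t_0$ and $1-t_0$ of $\varphi_p^{a,b,\pm}$ must line up — yields $A_i/l_\mu=B_j/l_\nu$ for adjacent intervals. Since the intervals form a connected chain covering $[0,L]$, all these ratios equal one constant $c>0$, so $A_i=c\,l_\mu$, $B_j=c\,l_\nu$, i.e. $u=c\,\Phi_{(\mu,\nu),p,L}^{a,b}$, with $P$ positivity and $N$ negativity intervals satisfying $P\,l_\mu+N\,l_\nu=L$ and $|P-N|\leq 1$. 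Because $l_\mu$ and $l_\nu$ are the classical hump lengths for $(\mu',\nu')$, this last pair of conditions is precisely what places $(\mu',\nu')$ on one of $\Sigma^e_p(0,L)$, $\Sigma^{o,1}_p(0,L)$, $\Sigma^{o,2}_p(0,L)$ (or, when $u$ has a single sign, on one of the two lines $\{\lambda_{1,p}\}\times\R$, $\R\times\{\lambda_{1,p}\}$, the degenerate case where one of $P,N$ is $0$). Hence $(\mu,\nu)\in\big(\tfrac{a+b}{2}\big)^{p}\Sigma_p(0,L)$, which is the nontrivial inclusion of (I).

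For the reverse inclusion and the ``if'' part, start from $(\mu,\nu)\in\big(\tfrac{a+b}{2}\big)^{p}\Sigma_p(0,L)$, read off from the classical description the integers $P,N$ with $|P-N|\leq 1$ and $P\,l_\mu+N\,l_\nu=L$, and assemble the alternating interspersed cover $\{I_i\}_{i=1}^{P}$, $\{J_j\}_{j=1}^{N}$ of $[0,L]$ with $|I_i|=l_\mu$ and $|J_j|=l_\nu$ (the two admissible starting signs when $P=N$ account for the two solution families). Put $u:=\Phi_{(\mu,\nu),p,L}^{a,b}$. Then $u\in C^{1}[0,L]$ — the humps vanish at the shared endpoints and the prefactors $l_\mu,l_\nu$ are exactly what makes $u'$ continuous there, by the same computation run backwards — and by Theorem \ref{T1} together with the reflection identity $u$ solves \eqref{FS} classically on every open subinterval, with eigenvalue $\mu$ on the $I_i$ (since $|I_i|=l_\mu$) and $\nu$ on the $J_j$. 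Continuity of the flux across the interior zeros then allows one to integrate by parts interval by interval against any test function in $W_0^{1,p}(0,L)$: the interior boundary terms cancel and the ones at $0$ and $L$ vanish, so $u$ is a weak solution on all of $(0,L)$. This both proves $\big(\tfrac{a+b}{2}\big)^{p}\Sigma_p(0,L)\subseteq\Sigma^{a,b}_p(0,L)$ and exhibits the solution claimed in (II).

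The part I expect to be delicate is making the hump decomposition airtight: passing from an a priori merely $W_0^{1,p}$ solution to a finite sign-alternating union of rescaled $\varphi_p^{a,b,\pm}$ relies on the simplicity of zeros (Theorem \ref{T4}) and on the fact — built into the proof of Theorem \ref{T1} — that a sign-definite solution of the Dirichlet problem on an interval is necessarily a first eigenfunction there, so that both the admissible eigenvalue and the shape are pinned down by the interval length alone. After that the only real work is verifying, in each of the two gluing patterns, that the $t_0$ versus $1-t_0$ break points of $\varphi_p^{a,b,+}$ and $\varphi_p^{a,b,-}$ are compatible, so that flux continuity produces a single global constant $c$; everything else is bookkeeping with the classical formulas for $\Sigma_p(0,L)$.
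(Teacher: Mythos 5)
Your proposal is correct and follows essentially the same route as the paper: Theorem \ref{T4} gives the finite alternating hump decomposition, the fact that a sign-definite Dirichlet solution on a subinterval must be a first eigenfunction (this is exactly Lemma \ref{L2} in the paper, proved via a Picone identity, rather than something ``built into'' Theorem \ref{T1}) pins down the hump lengths $l_\mu$, $l_\nu$ and shapes, and matching at the interior zeros (your flux continuity is equivalent to the paper's $C^1$ continuity of $u$) produces the single constant $c$ and the relation $P\,l_\mu + N\,l_\nu = L$ with $\vert P - N\vert \le 1$. The only substantive difference is that you carry out explicitly the converse gluing construction showing every point of $\left(\frac{a+b}{2}\right)^p \Sigma_p(0,L)$ is attained, a step the paper leaves implicit in its chain of set equalities.
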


\section{Proof of Theorem \ref{T1}}

For convenience, module a scaling of the domain, we assume that $L = 1$. Let $a,b > 0$ be any fixed numbers. By direct minimization theory, there is $u \in W^{1,p}_{0}(0,1)$ with $\lVert u \rVert_{p} = 1$ such that

\[
\int_{0}^{1} a^p \left( u'(t)^+ \right)^p + b^p \left( u'(t)^- \right)^p dt = \lambda_{1,p}^{a,b}(0,1).
\]
It is easy to see that $u$ is a weak solution of

\begin{equation}\label{pde}
-\left(a^{p}[(u')^{+}]^{p-1}-b^{p}[(u')^{-}]^{p-1}\right)' = \lambda_{1,p}^{a,b}(0,1)|u|^{p-2}u.
\end{equation}
To be more precise, for any $\varphi \in W^{1,p}_{0}(0,1)$, $u$ satisfies

\[
\int_{0}^{1}\left(a^{p}[(u'(t))^{+}]^{p-1}-b^{p}[(u'(t))^{-}]^{p-1}\right)\varphi'(t) dt =  \lambda_{1,p}^{a,b}(0,1)\int_{0}^{1}|u(t)|^{p-2}u(t)\varphi(t) dt.
\]
Standard regularity theory ensures us that $u \in C^1[0,1]$. Notice also that $u$ has definite sign in $(0,1)$. Otherwise, there would be a number $\overline{t} \in (0,1)$ such that $u \in W^{1,p}_{0}(0,\overline{t})\cap W^{1,p}_{0}(\overline{t},1)$, and thus

\begin{eqnarray*}
\lambda_{1,p}^{a,b}(0,1) &=& \int_{0}^{1} a^p \left( u'(t)^+ \right)^p + b^p \left( u'(t)^- \right)^p dt\\
&=& \int_{0}^{\overline{t}} a^p \left( u'(t)^+ \right)^p + b^p \left( u'(t)^- \right)^p dt + \int_{\overline{t}}^{1} a^p \left( u'(t)^+ \right)^p + b^p \left( u'(t)^- \right)^p dt\\
&=& \lambda_{1,p}^{a,b}(0,\overline{t}) \int_{0}^{\overline{t}} |u(t)|^p dt + \lambda_{1,p}^{a,b}(\overline{t}, 1) \int_{\overline{t}}^{1} |u(t)|^p dt \\
&\geq& \min\{\lambda_{1,p}^{a,b}(0,\overline{t}), \lambda_{1,p}^{a,b}(\overline{t}, 1)\},
\end{eqnarray*}
which contradicts the strict monotonicity $\lambda_{1,p}^{a,b}(0,1) < \lambda_{1,p}^{a,b}(0,\overline{t})$ and $\lambda_{1,p}^{a,b}(0,1) < \lambda_{1,p}^{a,b}(\overline{t}, 1)$ with respect to the domain. This latter follows readily from the positivity of $\lambda_{1,p}^{a,b}(0,1)$, invariance property by translation and the scaling property

\[
\lambda_{1,p}^{a,b}(0, L) = \frac{1}{L^p} \lambda_{1,p}^{a,b}(0,1)
\]
for every $L > 0$.

From now on assume that $u \geq 0$ (the case $u \leq 0$ is analogous). By Harnack inequality (see \cite{Tr}), we have $u > 0$ in $(0,1)$. Since $u(0) = 0$ and $u(1) = 0$, by the Hopf´s lemma, we know that $u'(0) >  0$ and $u'(1) < 0$. Let $t_0 := \min\{t \in [0,1]: u'(t) = 0\}$ and $t_1 := \max\{t \in [0,1]: u'(t) = 0\}$. Clearly, $0 < t_0 \leq t_1 < 1$, $u'(t) > 0$ for every $t \in (0,t_0)$ and $u'(t) < 0$ for every $t \in (t_1, 1)$. We assert that $t_0 = t_1$. Assume by contradiction that $t_0 < t_1$ and consider the following functions

\begin{equation*}
w_1(t) =
\begin{cases}
\begin{aligned}
     & u(t)\ \ \text{if} \ t \in (0,t_{0}), \\
     & u(2t_{0}-t)\ \ \text{if} \ t \in [t_{0},2t_{0}),
\end{aligned}
\end{cases}
\end{equation*}
\begin{equation*}
w_2(t) =
\begin{cases}
\begin{aligned}
     & u(2t_{1}-t)\ \ \text{if} \ t \in (2t_{1}-1,t_{1}), \\
     & u(t)\ \ \text{if} \ t \in [t_{1},1).
\end{aligned}
\end{cases}
\end{equation*}
Note that $w_1 > 0$ in $(0,2 t_0)$ and $w_2 > 0$ in $(2 t_1 - 1,1)$ and that $w_1 \in W^{1,p}_{0}(0,2t_{0})$ and $w_2 \in W^{1,p}_{0}(2t_{1}-1,1)$. Also, since $u$ is a weak solution of \eqref{pde} and $u'(t_0) = u'(t_1) = 0$, by Lemma \ref{L1}, each of them satisfies in the weak sense the respective equations

\[
-(|w_1'|^{p-2}w_1)' = \frac{\lambda_{1,p}^{a,b}(0,1)}{a^{p}}|w_1|^{p-2}w_1
\]
and

\[
-(|w_2'|^{p-2}w_2)' = \frac{\lambda_{1,p}^{a,b}(0,1)}{b^{p}}|w_2|^{p-2}w_2,
\]
which implies that $w_1$ is a principal eigenfunction of the $p$-Laplace operator associated to $\lambda_{1,p}(0,2t_{0})$ and $w_2$ is an one associated to $\lambda_{1,p}(2t_{1}-1, 1)$. Hence, by spectral properties of $- \Delta_p$, we have

\[
\lambda_{1,p}(0,2t_{0}) = \frac{\lambda_{1,p}^{a,b}(0,1)}{a^{p}}
\]
and

\[
\lambda_{1,p}(2t_{1}-1, 1) = \frac{\lambda_{1,p}^{a,b}(0,1)}{b^{p}},
\]
which yield

\[
a^{p}\lambda_{1,p}(0,2t_{0}) = b^{p}\lambda_{1,p}(2t_{1}-1, 1)
\]
or equivalently

\begin{equation}\label{lambdaab1}
 \frac{a^{p}}{(2t_{0})^{p}} \lambda_{1,p}(0,1) = \frac{b^{p}}{(2(1-t_{1}))^{p}} \lambda_{1,p}(0,1).
\end{equation}
Since we are assuming by contradiction that $t_0 < t_1$, the above equality yields $t_{1} > \frac{a}{a+b}$ and so

\begin{equation} \label{lambdaab2}
\lambda_{1,p}^{a,b}(0,1) =  \frac{b^{p}}{(2(1 - t_{1}))^{p}}\lambda_{1,p}(0,1) > \frac{b^{p}}{\left(2\left(1 - \frac{a}{a+b}\right)\right)^{p}}\lambda_{1,p}(0,1)  = \left(\frac{a+b}{2}\right)^{p}\lambda_{1,p}(0,1).
\end{equation}
Now let $\varphi_{p} \in W_0^{1,p}(0,1)$ be as in the statement and let $t^{*} =\frac{a}{a+b}$. Define the positive function

\begin{equation}\label{lambdaab3}
u(t) =
\begin{cases}
\begin{aligned}
     & \varphi_{p}\left(\frac{t}{2t^{*}}\right)\ \ \text{if} \ t \in (0,t^{*}), \\
     & \varphi_{p}\left(\frac{t + 1 - 2t^{*}}{2(1 - t^{*})}\right)\ \ \text{if} \ t \in [t^{*},1).
\end{aligned}
\end{cases}
\end{equation}
It is clear that $u \in W^{1,p}_{0}(0,1)$ and notice that

\begin{align*}
\int_{0}^{1} |u|^{p} dt &= \int_{0}^{t^{*}} \left|\varphi_{p}\left(\frac{t}{2t^{*}}\right)\right|^{p} dt + \int_{t^{*}}^{1} \left|\varphi_{p}\left(\frac{t+1-2t^{*}}{2(1-t^{*})}\right)\right|^{p} dt \\
&= 2t^{*} \int_{0}^{\frac{1}{2}}|\varphi_{p}(t)|^{p} dt + 2(1-t^{*}) \int_{\frac{1}{2}}^{1}|\varphi_{p}(t)|^{p} dt = t^{*} + 1 - t^{*} = 1
\end{align*}
and

\begin{align*}
& \int_{0}^{1} a^{p} [(u')^{+}]^{p} + b^{p} [(u')^{-}]^{p} dt \\
&= a^{p}\int_{0}^{t^{*}}\frac{1}{(2t^{*})^{p}}\left| \varphi_{p}'\left(\frac{t}{2t^{*}}\right)\right|^{p} dt + b^{p}\int_{t^{*}}^{1}\frac{1}{(2(1-t^{*}))^{p}}\left|\varphi_{p}'\left(\frac{t + 1- 2t^{*}}{2(1-t^{*})}\right)\right|^{p} dt \\
&= \frac{a^{p}}{(2t^{*})^{p-1}}\int_{0}^{\frac{1}{2}} |\varphi_{p}'(t)|^{p}\ dt + \frac{b^{p}}{(2(1-t^{*}))^{p-1}}\int_{\frac{1}{2}}^{1} |\varphi_{p}'(t)|^{p}\ dt \\
&= \frac{a^{p}}{(2t^{*})^{p-1}}\frac{\lambda_{1,p}(0,1)}{2}+\frac{b^{p}}{(2(1-t^{*}))^{p-1}}\frac{\lambda_{1,p}(0,1)}{2} \\
&=\frac{\lambda_{1,p}(0,1)}{2^{p}}\left(\frac{a^{p}}{\left(\frac{a}{a+b}\right)^{p-1}}+\frac{b^{p}}{\left(1-\frac{a}{a+b}\right)^{p-1}}\right)\\
&=\frac{\lambda_{1,p}(0,1)}{2^{p}}\left(a(a+b)^{p-1}+b(a+b)^{p-1}\right)\\
&= \frac{\lambda_{1,p}(0,1)}{2^{p}}(a+b)(a+b)^{p-1} \\
&= \left(\frac{a+b}{2}\right)^{p}\lambda_{1,p}(0,1).
\end{align*}
Consequently, $\lambda_{1,p}^{a,b}(0,1) \leq \left(\frac{a+b}{2}\right)^{p}\lambda_{1,p}(0,1)$, which contradicts \eqref{lambdaab2}. Therefore, $t_{0} = t_{1}$ and, from \eqref{lambdaab1}, we have $t_{0} = \frac{a}{a+b}$. Thus,

\[
\lambda_{1,p}^{a,b}(0,1) = a^p \lambda_{1,p}(0, 2t_0) = \frac{a^{p}}{\left(\frac{2a}{a+b}\right)^{p}}\lambda_{1,p}(0,1) = \left(\frac{a+b}{2}\right)^{p}\lambda_{1,p}(0,1).
\]
Furthermore, using strict convexity of the function (e.g. mimicking the proof of Theorem 3.1 in \cite{BFK})

\[
t \in \R \mapsto a^p\, (t^+)^p + b^p\, (t^-)^p,
\]
it is easily checked that there is at most one normalized extremizer with the same sign. This implies that $u$, as constructed in \eqref{lambdaab3}, is the desired positive eigenfunction $\varphi^{a,b,+}_p$.

Finally, the eigenspace of $\lambda^{a,b}_{1,p}(0,1)$ is formed by two collinear half-lines if and only if $\varphi_{p}^{a,b,+} = - \varphi_{p}^{a,b,-}$ in $(0,1)$. But, each of these functions has only a critical point, namely, $t_0$ and $1 - t_0$. Then, we have

\[
t_0 = \frac{a}{a + b} = \frac{1}{2},
\]
which is equivalent to $a = b$.

\section{On simplicity of zeroes related to $\Delta_{p}^{a,b}$}

This section is devoted to the proof of a result on simplicity of zeroes for nontrivial solutions of doubly asymmetric equations, both with respect to the operator and to nonlinearity.

\begin{teor} \label{T4} Let $a, b, L, \mu, \nu > 0$. If $u \in W_0^{1,p}(0,L)$ is a nontrivial weak solution of
\begin{equation*}
\begin{cases}
\begin{aligned}
& - \Delta_{p}^{a,b} u = \mu (u^+)^{p-1} - \nu (u^-)^{p-1} \ \ \text{in} \ \ (0,L), \\
& u(0) = u(L) = 0,
\end{aligned}
\end{cases}
\end{equation*}
then the zeroes of $u$ are simple.
\end{teor}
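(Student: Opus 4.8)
The plan is to argue by contradiction: suppose $t^\ast \in (0,L)$ is a zero of $u$ that is not simple, meaning $u(t^\ast) = u'(t^\ast) = 0$ (recall $u \in C^1[0,L]$ by standard regularity, so $u'$ makes sense pointwise). Since $u$ is a nontrivial weak solution, it is a classical solution away from its zeroes; the key is to understand the local behaviour of $u$ near $t^\ast$. On an interval to the right of $t^\ast$ where $u$ has, say, a fixed sign, the equation $-\Delta_p^{a,b} u = \mu (u^+)^{p-1} - \nu (u^-)^{p-1}$ reduces to a classical one-dimensional $p$-Laplace ODE with constant coefficients: where $u > 0$ it is $-a^p(|u'|^{p-2}u')' = \mu u^{p-1}$ (if $u'$ has a sign there), and where $u < 0$ it is $-b^p(|u'|^{p-2}u')' = -\nu |u|^{p-1}$. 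For such autonomous equations one has a first integral (a Lyapunov-type energy): multiplying by $u'$ and integrating yields a conserved quantity of the form $\frac{p-1}{p}a^p |u'|^p + \frac{\mu}{p} u^p = \text{const}$ on a positivity interval (and similarly on a negativity interval).

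First I would set up this energy identity carefully on a right-neighbourhood $(t^\ast, t^\ast + \delta)$ of the bad zero. If $u$ were, say, eventually positive just to the right of $t^\ast$, then on the maximal such interval the energy $E(t) := \frac{p-1}{p}a^p |u'(t)|^p + \frac{\mu}{p} u(t)^p$ is constant; evaluating at $t \to t^{\ast +}$ gives $E \equiv 0$ because both $u(t^\ast) = 0$ and $u'(t^\ast) = 0$. But $E \equiv 0$ with $\mu > 0$ forces $u \equiv 0$ and $u' \equiv 0$ on that interval, hence $u$ vanishes on a full subinterval. The same conclusion holds if $u$ is eventually negative (using the $b,\nu$ energy), and if $u \equiv 0$ on an interval to the right. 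A symmetric argument works to the left. So in every case $u$ vanishes identically on a neighbourhood of $t^\ast$.

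From "$u$ vanishes on an open subinterval" I would then propagate to get $u \equiv 0$ on all of $(0,L)$, contradicting nontriviality. The cleanest route: let $\Omega = \{ t \in (0,L) : u \equiv 0 \text{ on a neighbourhood of } t \}$; this is open and nonempty, and I claim it is closed in $(0,L)$, hence all of $(0,L)$ by connectedness. If $s \in \partial\Omega \cap (0,L)$, then $u(s) = 0$ and, since $u$ vanishes on one side of $s$, also $u'(s) = 0$; so $s$ is again a non-simple zero, and by the energy argument of the previous paragraph $u$ vanishes on a neighbourhood of $s$, i.e. $s \in \Omega$ — contradicting $s \in \partial\Omega$. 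Thus $\Omega = (0,L)$, so $u \equiv 0$, contradiction. Alternatively one can invoke a unique-continuation / ODE-uniqueness statement directly, but the energy-continuation argument above is self-contained and avoids subtleties of $p$-Laplace uniqueness near degenerate points.

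The main obstacle I anticipate is the degeneracy of the $p$-Laplacian at critical points of $u$: the map $v \mapsto |v|^{p-2}v$ is not Lipschitz at $0$ when $p > 2$ (and not differentiable there when $p < 2$), so one cannot simply quote a Picard-type uniqueness theorem for the ODE. This is precisely why I route the argument through the \emph{energy first integral} rather than through ODE uniqueness: the identity $\frac{p-1}{p}a^p|u'|^p + \frac{\mu}{p}u^p = \text{const}$ is an honest consequence of the equation on any interval where $u$ keeps a fixed sign (obtained by testing against $u'$, or against a mollified $u'$ and passing to the limit using $u \in C^1$), and it is robust to the degeneracy. A secondary point needing a little care is the bookkeeping of signs of $u$ and of $u'$ near $t^\ast$: I should note that in a one-sided neighbourhood of a zero, $u$ cannot oscillate in sign arbitrarily fast — either $u$ has a fixed sign on some $(t^\ast, t^\ast+\delta)$, or there is a sequence of zeroes accumulating at $t^\ast$, in which case by Rolle there are critical points accumulating too and one can still localize the energy argument on the pieces; in all scenarios the conclusion $u \equiv 0$ near $t^\ast$ follows. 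Everything else (the regularity $u \in C^1$, the reduction to the classical operator on sign-definite intervals, the scaling normalizations) is routine and may be cited from the earlier parts of the paper.
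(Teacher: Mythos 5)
Your route (argue by contradiction at a degenerate zero via an energy first integral, then unique continuation by connectedness) is genuinely different from the paper's, which never looks at a degenerate zero: it builds the nodal decomposition outward from a point where $u\neq 0$, applies Hopf's lemma at each nodal endpoint, and uses Lemma \ref{L2} together with Theorem \ref{T1} to force every positivity (resp.\ negativity) interval to have the exact length $l_\mu$ (resp.\ $l_\nu$), so the process terminates and every zero is a nodal endpoint where $u'\neq 0$. Your plan can be made to work, but as written it has two genuine gaps. The first is that your conserved quantity is wrong: the operator $\Delta_p^{a,b}$ is asymmetric in $u'$, not in $u$, so on an interval where $u>0$ the equation is $-\bigl(a^p[(u')^+]^{p-1}-b^p[(u')^-]^{p-1}\bigr)'=\mu u^{p-1}$, and $u'$ necessarily changes sign on any full nodal interval ($u$ rises from $0$ and comes back to $0$). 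Hence $\frac{p-1}{p}a^p|u'|^p+\frac{\mu}{p}u^p$ is \emph{not} constant there; the correct first integral is
\[
E(t)=\frac{p-1}{p}\Bigl(a^p[(u'(t))^{+}]^{p}+b^p[(u'(t))^{-}]^{p}\Bigr)+\frac{\mu}{p}\,u(t)^p,
\]
which follows from $\frac{d}{dt}G(u')=u'\,(F(u'))'$ with $F(s)=a^p(s^+)^{p-1}-b^p(s^-)^{p-1}$ and $G(s)=\frac{p-1}{p}\,sF(s)$. Your parenthetical ``if $u'$ has a sign there'' does not repair this, because the interval on which you compare $E(t^{*+})$ with interior values is a sign interval of $u$, not of $u'$. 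The fix is harmless — $E$ above still vanishes exactly when $u=u'=0$, and in the fixed-sign case $E\equiv 0$ contradicts $u>0$ outright — but it must be made.

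The second, more serious gap is the accumulating-zeroes case. Rolle gives you critical points of $u$ in each nodal interval, but at such a point $u\neq 0$ and the energy is strictly \emph{positive}, so no contradiction follows from that observation; ``localize the energy argument on the pieces'' does not by itself exclude infinitely many nodal intervals shrinking toward $t^{*}$. What you actually need is a uniform positive lower bound on the length of every maximal sign-definite interval of $u$, so that zeroes cannot accumulate unless $u$ vanishes identically on a one-sided neighbourhood (which then feeds into your connectedness step). This bound is available — either by quadrature from the first integral (the time for $u$ to rise from $0$ to its maximum is scale-invariant, hence independent of the energy level), or, as the paper does, by noting that $u$ restricted to a nodal interval is a sign-definite Dirichlet eigenfunction, hence by Lemma \ref{L2} a first eigenfunction, so the interval has length exactly $l_\mu$ or $l_\nu$ — but it is an essential ingredient that your sketch omits. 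With these two repairs your scheme closes; what it buys over the paper's argument is that it isolates a self-contained ODE mechanism (the first integral) in place of the Picone/Harnack/Hopf machinery, at the cost of having to justify energy conservation for weak solutions across critical points of $u$.
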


Before proving this theorem, we point out that eigenfunctions corresponding to the Dirichlet eigenvalues of $-\Delta_{p}^{a,b}$ other than $\lambda_{1,p}^{a,b}(0,L)$ necessarily change sign. Precisely, we have:

\begin{lema} \label{L2} Let $p > 1$ and $a, b, L > 0$. If $\lambda \neq \lambda_{1,p}^{a,b}(0,L)$ is a Dirichlet eigenvalue of $-\Delta_{p}^{a,b}$, then any eigenfunction associated to $\lambda$ changes sign.
\end{lema}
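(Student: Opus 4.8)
The plan is to argue by contradiction. Suppose $u$ is an eigenfunction associated to $\lambda$ that does not change sign, say $u \ge 0$ in $(0,L)$; the case $u \le 0$ I would reduce to this one by observing that $-u$ is an eigenfunction of $-\Delta_p^{b,a}$ for the \emph{same} $\lambda$ and invoking the symmetry $\lambda_{1,p}^{a,b}(0,L) = \lambda_{1,p}^{b,a}(0,L)$, which is immediate from Theorem \ref{T1}(I). So it suffices to show that a nonnegative eigenfunction forces $\lambda = \lambda_{1,p}^{a,b}(0,L)$.

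First I would record the standard preliminaries, exactly as in the proof of Theorem \ref{T1}: regularity gives $u \in C^1[0,L]$, the Harnack inequality \cite{Tr} gives $u > 0$ in $(0,L)$, and testing the equation against $u$ itself gives $\lambda > 0$. Next comes the key structural step: since $-\Delta_p^{a,b}u = \lambda u^{p-1} > 0$ in $(0,L)$, the function $t \mapsto a^p[(u'(t))^{+}]^{p-1} - b^p[(u'(t))^{-}]^{p-1}$ is strictly decreasing on $[0,L]$; because $s \mapsto a^p(s^{+})^{p-1} - b^p(s^{-})^{p-1}$ is a strictly increasing homeomorphism of $\R$, this forces $u'$ to be strictly decreasing. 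Together with $u(0) = u(L) = 0$ and Rolle's theorem, this yields a unique $t_0 \in (0,L)$ with $u'(t_0) = 0$, $u' > 0$ on $[0,t_0)$ and $u' < 0$ on $(t_0,L]$.

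With this shape in hand I would transfer the problem to the classical operator via Lemma \ref{L1}. Applying it with $\alpha = 0$, $\beta = t_0$ produces the even reflection $w_1 \in C^1[0,2t_0]$, a \emph{positive} Dirichlet weak solution of $-a^p\Delta_p w_1 = \lambda|w_1|^{p-2}w_1$ on $(0,2t_0)$; since sign-definite Dirichlet eigenfunctions of $-\Delta_p$ are principal \cite{dPEM}, this gives $\lambda a^{-p} = \lambda_{1,p}(0,2t_0) = (2t_0)^{-p}\lambda_{1,p}(0,1)$. For the decreasing arc I would apply the same lemma to $\tilde u(t) := u(L-t)$, which, after the change of variables $t \mapsto L-t$, is a $C^1$ Dirichlet weak solution of $-\Delta_p^{b,a}\tilde u = \lambda|\tilde u|^{p-2}\tilde u$ on $(0,L)$; using Lemma \ref{L1} with $a$ and $b$ interchanged, on the interval $(0,L-t_0)$, gives $\lambda b^{-p} = \lambda_{1,p}(0,2(L-t_0)) = (2(L-t_0))^{-p}\lambda_{1,p}(0,1)$. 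Equating the two expressions for $\lambda$ yields $a(L-t_0) = b\,t_0$, hence $t_0 = aL/(a+b)$, and substituting back gives $\lambda = \left(\frac{a+b}{2}\right)^p L^{-p}\lambda_{1,p}(0,1) = \left(\frac{a+b}{2}\right)^p \lambda_{1,p}(0,L) = \lambda_{1,p}^{a,b}(0,L)$ by Theorem \ref{T1}(I), contradicting $\lambda \neq \lambda_{1,p}^{a,b}(0,L)$.

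I expect the delicate point to be the passage from the sign of $-\Delta_p^{a,b}u$ to "\,$u$ has exactly one critical point\," (i.e., the strict monotonicity of $u'$): this is where the first-order structure of the one-dimensional asymmetric operator must be exploited, and one must work with the reparametrization $s \mapsto a^p(s^{+})^{p-1} - b^p(s^{-})^{p-1}$ in place of the usual $|s|^{p-2}s$. A secondary point needing care is that Lemma \ref{L1} reflects an \emph{increasing} arc about its right endpoint, so the decreasing arc of $u$ on $(t_0,L)$ cannot be fed into it directly; routing it through the reflected solution $\tilde u(\cdot) = u(L-\cdot)$, which solves the equation with $a$ and $b$ swapped, is what restores the symmetry and simultaneously pins down $t_0$ and $\lambda$.
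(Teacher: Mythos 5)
Your proposal is correct, but it follows a genuinely different route from the paper's. The paper proves Lemma \ref{L2} with a soft two-sided estimate: testing against $u$ gives $\lambda \geq \lambda_{1,p}^{a,b}$, and then the asymmetric Picone identity of Lemma \ref{L3}, applied with the known positive first eigenfunction $\varphi_p^{a,b,+}$ and the test function $(\varphi_p^{a,b,+})^p/(u+\varepsilon)^{p-1}$, gives $\lambda \leq \lambda_{1,p}^{a,b}$ upon letting $\varepsilon \to 0$. You instead compute $\lambda$ exactly: the observation that $\sigma(u')$ with $\sigma(s)=a^p(s^+)^{p-1}-b^p(s^-)^{p-1}$ has continuous derivative $-\lambda|u|^{p-2}u<0$, hence that $u'=\sigma^{-1}(\sigma(u'))$ is strictly decreasing and $u$ has a single critical point, is sound and is in fact a cleaner way to obtain the "$t_0=t_1$" structure than the variational argument used in the proof of Theorem \ref{T1} (which relies on minimality and so would not transfer to an arbitrary eigenvalue $\lambda$ — your monotonicity argument is exactly what makes the reflection strategy work here). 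Your handling of the decreasing arc through $\tilde u(t)=u(L-t)$, which solves the equation for $-\Delta_p^{b,a}$, correctly circumvents the fact that Lemma \ref{L1} only reflects increasing arcs, and the resulting identities $\lambda=a^p(2t_0)^{-p}\lambda_{1,p}(0,1)=b^p(2(L-t_0))^{-p}\lambda_{1,p}(0,1)$ do pin down $t_0=aL/(a+b)$ and $\lambda=\left(\frac{a+b}{2}\right)^p\lambda_{1,p}(0,L)=\lambda_{1,p}^{a,b}(0,L)$, as does your reduction of the case $u\le 0$ via the $(a,b)\leftrightarrow(b,a)$ symmetry. What each approach buys: yours is more constructive and essentially re-derives the profile of the sign-definite eigenfunction (so it could feed directly into Theorem \ref{T1}(II)), but it is tied to the one-dimensional ODE structure; the paper's Picone argument is shorter given Lemma \ref{L3}, needs no information about critical points, and is the version that would survive in higher dimensions.
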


\n This is a quite standard result and its proof uses the well-known Picone's identity associated to the operator $-\Delta_{p}^{a,b}$, namely:

\begin{lema} \label{L3} Let $p > 1$, $a,b, L > 0$ and $H_{a,b}(t) := at^{+} +bt^{-}$ for $t \in \R$. Let also $u, v \in C^1(0,L)$ be functions such that $v > 0$ and $u \geq 0$ in $(0, L)$. Set

\[
R_{a,b}(u,v) := H_{a,b}^{p}(u') - \left(\frac{u^{p}}{v^{p-1}}\right)'H_{a,b}^{p-1}(v')H'_{a,b}(v')
\]
and

\[
P_{a,b}(u,v) := H_{a,b}^{p}(u') - \frac{u^{p}}{v^{p}}H_{a,b}^{p}(v') - p\frac{u^{p-1}}{v^{p-1}}H_{a,b}^{p}(v')\left(\frac{u'}{v'}-\frac{u}{v}\right).
\]
Then,

\[
R_{a,b}(u,v) = P_{a,b}(u,v) \geq 0 \ \ {\rm in}\ \ (0, L).
\]
\end{lema}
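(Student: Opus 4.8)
The plan is to handle the identity $R_{a,b}(u,v)=P_{a,b}(u,v)$ and the inequality $P_{a,b}(u,v)\ge 0$ separately, both reduced to two elementary pointwise facts about the convex, positively $1$-homogeneous function $H_{a,b}(t)=at^{+}+bt^{-}=\max\{at,-bt\}$: first, Euler's relation $H'_{a,b}(t)\,t=H_{a,b}(t)$ for $t\neq 0$; second, the subgradient inequality $H'_{a,b}(t)\,s\le H_{a,b}(s)$ for all $s\in\R$ and $t\neq 0$, which follows at once by combining convexity of $H_{a,b}$ with the first relation. I will also use that $H_{a,b}\ge 0$. Where $v'=0$ the factor $H_{a,b}^{p-1}(v')$ already vanishes (since $p>1$), so every occurrence of $H'_{a,b}(v')$ is multiplied by a quantity that removes the ambiguity at the origin; accordingly I read the cross term of $P_{a,b}$ through the identity $H_{a,b}^{p}(v')/v'=H_{a,b}^{p-1}(v')H'_{a,b}(v')$ (valid for $v'\neq 0$, extending continuously by $0$), so that both $R_{a,b}$ and $P_{a,b}$ are genuinely continuous on $(0,L)$.

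For the identity I expand the derivative
\[
\left(\frac{u^{p}}{v^{p-1}}\right)' = p\,\frac{u^{p-1}u'}{v^{p-1}}-(p-1)\,\frac{u^{p}v'}{v^{p}}
\]
and substitute it into $R_{a,b}$; by Euler's relation in the form $v'\,H'_{a,b}(v')=H_{a,b}(v')$, the second summand contributes $-(p-1)\frac{u^{p}}{v^{p}}H_{a,b}^{p}(v')$. Expanding $P_{a,b}$ directly and rewriting $\frac{u'}{v'}H_{a,b}^{p}(v')=u'\,H_{a,b}^{p-1}(v')H'_{a,b}(v')$, one finds that both $R_{a,b}$ and $P_{a,b}$ coincide with
\[
H_{a,b}^{p}(u')-p\,\frac{u^{p-1}u'}{v^{p-1}}\,H_{a,b}^{p-1}(v')H'_{a,b}(v')+(p-1)\,\frac{u^{p}}{v^{p}}H_{a,b}^{p}(v'),
\]
which is a purely algebraic verification.

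For nonnegativity I work from this common expression. Since $u\ge 0$ and $v>0$, the coefficient $\frac{u^{p-1}}{v^{p-1}}H_{a,b}^{p-1}(v')$ is $\ge 0$, so the subgradient inequality with $s=u'$, $t=v'$ yields
\[
-p\,\frac{u^{p-1}u'}{v^{p-1}}\,H_{a,b}^{p-1}(v')H'_{a,b}(v') \ \ge\ -p\,\frac{u^{p-1}}{v^{p-1}}\,H_{a,b}^{p-1}(v')\,H_{a,b}(u').
\]
Hence $P_{a,b}(u,v)\ge X^{p}-pXY^{p-1}+(p-1)Y^{p}$ with $X:=H_{a,b}(u')\ge 0$ and $Y:=\frac{u}{v}H_{a,b}(v')\ge 0$, and this last quantity is nonnegative by Young's inequality $\frac{1}{p}X^{p}+\frac{p-1}{p}Y^{p}\ge XY^{p-1}$ (weighted AM--GM), which finishes the proof.

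The computations are routine; the one genuinely delicate point is isolating exactly the two properties of $H_{a,b}$ that are needed — Euler's identity and the subgradient bound — so that the classical Picone/Young argument transfers verbatim despite the asymmetry and the nonsmoothness of $H_{a,b}$ at the origin, together with keeping the sign of the factor $\frac{u^{p-1}}{v^{p-1}}H_{a,b}^{p-1}(v')$ straight when passing the subgradient inequality inside the cross term (this is where the hypothesis $u\ge 0$ is essential).
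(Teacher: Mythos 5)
Your proof is correct. The identity part coincides with the paper's (which simply calls it ``a straightforward computation''): both reduce $R_{a,b}(u,v)$ and $P_{a,b}(u,v)$ to the common expression $H_{a,b}^{p}(u')-p\,\frac{u^{p-1}u'}{v^{p-1}}H_{a,b}^{p-1}(v')H'_{a,b}(v')+(p-1)\frac{u^{p}}{v^{p}}H_{a,b}^{p}(v')$ via Euler's relation $t\,H'_{a,b}(t)=H_{a,b}(t)$, and your explicit handling of the points where $v'=0$ (reading $H_{a,b}^{p}(v')/v'$ as $H_{a,b}^{p-1}(v')H'_{a,b}(v')$, which vanishes there since $p>1$) is a welcome precision that the paper glosses over. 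For the nonnegativity you take a mildly different route: the paper invokes the single first-order convexity inequality $H_{a,b}^{p}(t+h)\ge H_{a,b}^{p}(t)+pH_{a,b}^{p-1}(t)H'_{a,b}(t)h$, applied implicitly at $t=\frac{u}{v}v'$ and $h=u'-\frac{u}{v}v'$ together with the positive $1$-homogeneity $H_{a,b}(\frac{u}{v}v')=\frac{u}{v}H_{a,b}(v')$, which yields $P_{a,b}\ge 0$ in one stroke; you instead split the argument into the subgradient bound $H'_{a,b}(v')\,u'\le H_{a,b}(u')$ followed by the weighted AM--GM inequality $X^{p}-pXY^{p-1}+(p-1)Y^{p}\ge 0$ with $X=H_{a,b}(u')$, $Y=\frac{u}{v}H_{a,b}(v')$. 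The two routes are equivalent in substance (your two steps are exactly the linearization of $H_{a,b}$ plus the convexity of $s\mapsto s^{p}$ on $[0,\infty)$, whose composition is the paper's inequality), but yours has the advantage of isolating precisely where the hypotheses $u\ge 0$ and $v>0$ enter, while the paper's is more compact and also records the strictness of the inequality, which it does not actually need for this lemma.
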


\begin{proof} The identity $R_{a,b}(u,v) = P_{a,b}(u,v)$ in $(0,L)$ follows from a straightforward computation. The nonnegativity of $P_{a,b}(u,v)$ in $(0,L)$ comes from the inequality
\[
H_{a,b}^{p}(t+h) > H_{a,b}^{p}(t) + p H_{a,b}^{p-1}(t)H'_{a,b}(t)h,\ \forall t, h \in \R,\ h \neq 0,
\]
which is an immediate consequence of the strict convexity of the function $t \in \R \mapsto H_{a,b}^{p}(t)$.
\end{proof}

\begin{proof}[Proof of Lemma \ref{L2}] We present the proof for completeness. Assume without loss of generality that $L = 1$. Let $u \in W_0^{1,p}(0,1)$ be a Dirichlet eigenfunction of $-\Delta_{p}^{a,b}$ associated to the eigenvalue $\lambda$. By elliptic regularity theory, one knows that $u$ belongs to $C^1[0,1]$. From the variational characterization of $\lambda_{1,p}^{a,b}(0,1)$, it is clear that $\lambda \geq \lambda_{1,p}^{a,b}(0,1)$.

Consider now the positive eigenfunction $\varphi_{p}^{a,b,+} \in C^1[0,1]$ corresponding to $\lambda_{1,p}^{a,b}(0,1)$ given in Theorem \ref{T1}. Assume by contradiction that $u$ is nonnegative in $(0,1)$. Then, the Harnack inequality guarantees that $u > 0$ in $(0,1)$. Using the fact that

\[
v_{\varepsilon} = \frac{(\varphi_{p}^{a,b,+})^{p}}{(u+\varepsilon)^{p-1}} \in W^{1,p}_{0}(0,1)
\]
for every $\varepsilon > 0$, by Lemma \ref{L3}, we have

\begin{align*}
0 & \leq \int_{0}^{1} P_{a,b}(\varphi_{p}^{a,b,+}, u + \varepsilon) \, dt = \int_{0}^{1} R_{a,b}(\varphi_{p}^{a,b,+},u+\varepsilon) \, dt  \\
& = \int_{0}^{1} H_{a,b}^{p}((\varphi_{p}^{a,b,+})') \, dt - \int_{0}^{1} \left( \frac{(\varphi_{p}^{a,b,+})^{p}}{(u+\varepsilon)^{p-1}} \right)' H_{a,b}^{p-1}((u + \varepsilon)')
H'_{a,b}((u + \varepsilon)') \, dt \\
& = \lambda_{1,p}^{a,b}(0,1) \int_{0}^{1}|\varphi_{p}^{a,b,+}|^{p} \, dt - \int_{0}^{1}\left(a^{p}[(u')^{+}]^{p-1}-b^{p}[(u')^{-}]^{p-1}\right) \left(\frac{(\varphi_{p}^{a,b,+})^{p}}{(u+\varepsilon)^{p-1}}\right)' \, dt \\
& = \lambda_{1,p}^{a,b}(0,1)\int_{0}^{1}|\varphi_{p}^{a,b,+}|^{p} \,dt - \lambda\int_{0}^{1}|u|^{p-2}u\left(\frac{(\varphi_{p}^{a,b,+})^{p}}{(u+\varepsilon)^{p-1}}\right) \, dt.
\end{align*}
Letting $\varepsilon \rightarrow 0$, we get

\[
0 \leq \left(\lambda_{1,p}^{a,b}(0,1) - \lambda\right)\int_{0}^{1}|\varphi_{p}^{a,b,+}|^{p} \,dt,
\]
so that $\lambda \leq \lambda_{1,p}^{a,b}(0,1)$, which leads us to the contradiction $\lambda = \lambda_{1,p}^{a,b}(0,1)$.
\end{proof}

We now prove Theorem \ref{T4} through an inductive procedure with the aid of Lemma \ref{L2} and maximum principles.

\begin{proof}[Proof of Theorem \ref{T4}] By elliptic regularity theory, we have $u \in C^1[0,L]$. Since $u$ is nonzero, there is $t_{0} \in (0, L)$ such that $u(t_{0}) \neq 0$. Assume that $u(t_{0}) > 0$ (the negative case is similar). Let $(\alpha_{0},\beta_{0}) \subset (0,L)$ be the maximal interval containing $t_{0}$ where $u$ is positive. Clearly, $u \in C^1[\alpha_{0},\beta_{0}]$ is a positive weak solution of

\begin{equation}\label{mu1}
\begin{cases}
\begin{aligned}
& - \Delta_{p}^{a,b} u = \mu|u|^{p-2}u \ \ \text{in} \ \ (\alpha_{0},\beta_{0}), \\
& u(\alpha_{0}) = u(\beta_{0}) = 0.
\end{aligned}
\end{cases}
\end{equation}
Since $\mu > 0$, by the Hopf's lemma, $u$ satisfies $u'(\alpha_{0}) > 0$ and $u'(\beta_{0}) < 0$. If $(\alpha_{0},\beta_{0}) = (0,L)$ then we have nothing to do. Otherwise, if $\alpha_{0} > 0$ then there is an interval $(\alpha_{1},\alpha_{0}) \subset (0,L)$ such that $u$ is negative in it and $u(\alpha_{0}) = u(\alpha_{1}) = 0$. Similarly, if $\beta_{0} < L$ then we have an interval $(\beta_{0},\beta_{1})$ such that $u$ is negative in it and $u(\beta_{0}) = u(\beta_{1}) = 0$. In each case, we have that $u$ is a negative weak solution of the equations

\begin{equation}\label{nu1}
\begin{cases}
\begin{aligned}
& - \Delta_{p}^{a,b} u = \nu|u|^{p-2}u \ \ \text{in} \ \ (\alpha_{1},\alpha_{0}), \\
& u(\alpha_{0}) = u(\alpha_{1}) = 0,
\end{aligned}
\end{cases}
\end{equation}
\begin{equation*}
\begin{cases}
\begin{aligned}
& - \Delta_{p}^{a,b} u = \nu|u|^{p-2}u \ \ \text{in} \ \ (\beta_{0},\beta_{1}), \\
& u(\beta_{0}) = u(\beta_{1}) = 0.
\end{aligned}
\end{cases}
\end{equation*}
Since $(\alpha_{1},\beta_{1}) \neq (0,L)$ and $\nu > 0$ we can keep doing this argument and, with the aid of the Hopf's lemma, the sign of $u$ follows interchanging. By Lemma \ref{L2}, notice that due to $u$ be a nonzero weak solution of \eqref{mu1} and \eqref{nu1} that does not change sign, then

\[
\mu = \lambda_{1,p}^{a,b}(\alpha_{0},\beta_{0})\ \ \ {\rm and}\ \ \ \nu = \lambda_{1,p}^{a,b}(\alpha_{1},\alpha_{0}).
\]
This shows that all intervals where $u$ is positive have length $\beta_{0}-\alpha_{0}$ and all intervals where $u$ is negative have length $\alpha_{0}-\alpha_{1}$. Therefore, if we keep repeating this argument, there will be positive integers $m$ and $n$ such that

\[
0 \leq \alpha_{m} < \begin{cases}
\begin{aligned}
& \beta_{0}-\alpha_{0}, \ \text{if $m$ is odd} \\
& \alpha_{0}-\alpha_{1}, \ \text{if $m$ is even}
\end{aligned}
\end{cases}
\]
and
\[
0 \leq L-\beta_{n} < \begin{cases}
\begin{aligned}
& \beta_{0}-\alpha_{0}, \ \text{if $n$ is even} \\
& \alpha_{0}-\alpha_{1}, \ \text{if $n$ is odd}.
\end{aligned}
\end{cases}
\]
Assume first $m$ is odd and $\alpha_{m} > 0$. As argued before we will have $u'(\alpha_{m}) < 0$, which implies there is an interval $(\alpha_{m+1},\alpha_{m})$ with $\alpha_{m+1} \geq 0$ such that $u$ is a positive weak solution of

\begin{equation}
\begin{cases}
\begin{aligned}
& - \Delta_{p}^{a,b} u = \mu|u|^{p-2}u \ \ \text{in} \ \ (\alpha_{m + 1},\alpha_{m}), \\
& u(\alpha_{m+1}) = u(\alpha_{m}) = 0.
\end{aligned}
\end{cases}
\end{equation}
Therefore, since $\mu > 0$, by Lemma \ref{L2}, we have $\mu = \lambda_{1,p}^{a,b}(\alpha_{m+1},\alpha_{m})$. Hence,

\[
\beta_{0}-\alpha_{0} = \alpha_{m}-\alpha_{m+1} \leq \alpha_{m} < \beta_{0}-\alpha_{0},
\]
leading to a contradiction, and thus $\alpha_{m} = 0$. The argument for $\beta_{n} = L$ when $n$ is odd and also for the case $m$ or $n$ even is analogous. This proves that the step-by-step procedure performed above drops out after a finite number of steps, and so the zeroes of $u$ in $[0,L]$ are precisely given by $\{\alpha_{i}\}_{i=0}^{m}$ and $\{\beta_{i}\}_{i=0}^{n}$ and the derivative at each of them is nonzero due to the Hopf's lemma. This concludes the proof.
\end{proof}

\section{Proof of Theorem \ref{T3}}

The proof of this theorem uses the characterization provided in Theorem \ref{T1} and the zeroes simplicity property ensured in Theorem \ref{T4}.

\begin{proof}[Proof of Theorem \ref{T3}] Let $u \in W_0^{1,p}(0,L)$ be a nontrivial weak solution of \eqref{FS} corresponding to a couple $(\mu, \nu) \in \Sigma^{a,b}_p(0,L)$. By Theorem \ref{T1} and Lemma \ref{L2}, we have three possible cases:

\[
\mu = \left( \frac{a + b}{2} \right)^p \lambda_{1,p}(0,L)\ \ {\rm or}\ \ \nu = \left( \frac{a + b}{2} \right)^p \lambda_{1,p}(0,L)\ \ {\rm or}\ \ (\mu, \nu) \in \R_+^2.
\]
From now on we assume the latter one. By Theorem \ref{T4}, since the zeroes of $u$ are simple, there are positive integers $P = P(\mu, \nu)$ and $N = N(\mu, \nu)$ and an interspersed cover of $[0,L]$ by open intervals $\{I_i\}^P_{i=1}$ and $\{J_j\}^N_{j=1}$ such that $u$ is positive in $I_i$ and negative in $J_j$ and satisfies the equations in the weak sense

\begin{equation} \label{P}
\begin{cases}
\begin{aligned}
- \Delta_{p}^{a,b} u & = \mu \vert u \vert^{p-2} u \ \  \text{in} \ I_i, \\
u & = 0  \hspace{1.5cm} \text{on} \ \partial I_i
\end{aligned}
\end{cases}
\end{equation}
and

\begin{equation} \label{N}
\begin{cases}
\begin{aligned}
- \Delta_{p}^{a,b} u & = \nu \vert u \vert^{p-2} u  \ \ \text{in} \ J_j, \\
u & = 0 \hspace{1.5cm} \text{on} \ \partial J_j.
\end{aligned}
\end{cases}
\end{equation}
Since $\mu, \nu > 0$, by Theorem \ref{T1} and Lemma \ref{L2}, we have

\[
\mu = \lambda_{1,p}^{a,b}(I_i) = |I_i|^{-p} \lambda_{1,p}^{a,b}(0,1)
\]
and

\[
\nu = \lambda_{1,p}^{a,b}(J_j) = |J_j|^{-p} \lambda_{1,p}^{a,b}(0,1).
\]
Hence, all intervals $I_i$ has the same measure $l_\mu$, just as all $J_j$ has the same measure $l_\nu$, where

\[
l_\mu = \left( \frac{\lambda^{a,b}_{1,p}(0,1)}{\mu} \right)^{1/p}\ \ \ {\rm and}\ \ \ l_\nu = \left( \frac{\lambda^{a,b}_{1,p}(0,1)}{\nu} \right)^{1/p}.
\]
Since the cover of $[0, L]$ is interspersed, we have $\vert P-N \vert \in \{0,1\}$ and $P l_{\mu} + N l_{\nu} = L$. In other words,

\[
P \frac{\left(\lambda_{1,p}^{a,b}(0,1)\right)^{\frac{1}{p}}}{\mu^{\frac{1}{p}}} + N \frac{\left(\lambda_{1,p}^{a,b}(0,1)\right)^{\frac{1}{p}}}{\nu^{\frac{1}{p}}} = L,
\]
or equivalently,

\[
\frac{P}{\mu^{\frac{1}{p}}} + \frac{N}{\nu^{\frac{1}{p}}} = \left(\lambda_{1,p}^{a,b}(0,L)\right)^{-\frac{1}{p}}.
\]
Consequently, if we set $S \coloneqq \left\{(P,N) \in \N \times \N \colon\, \vert P-N \vert \in \{0,1\}\right\}$ where $\N$ represents the set of positive integers, then

\begin{align*}
\Sigma_{p}^{a,b}(0,L) \cap \R_+^2 &= \bigcup_{(P,N) \in S}\left\{(\mu,\nu) \in \R_+^2 \colon \frac{P}{\mu^{\frac{1}{p}}} + \frac{N}{\nu^{\frac{1}{p}}} = \left(\lambda_{1,p}^{a,b}(0,L)\right)^{-\frac{1}{p}} \right\} \\
&= \bigcup_{(P,N) \in S}\left\{(\mu,\nu) \in \R_+^2 \colon \frac{P}{\mu^{\frac{1}{p}}} + \frac{N}{\nu^{\frac{1}{p}}} =  \left(\frac{a+b}{2}\right)^{-1} \left(\lambda_{1,p}(0,L)\right)^{-\frac{1}{p}} \right\} \\
&= \left(\frac{a+b}{2}\right)^{p}\bigcup_{(P,N) \in S}\left\{(\mu,\nu) \in \R_+^2 \colon \frac{P}{\mu^{\frac{1}{p}}} + \frac{N}{\nu^{\frac{1}{p}}} =  \left(\lambda_{1,p}(0,L)\right)^{-\frac{1}{p}}\right\} \\
&= \left(\frac{a+b}{2}\right)^{p}\Sigma_{p}(0,L) \cap \R_+^2.
\end{align*}

Finally, we characterize the nontrivial solution $u \in W_0^{1,p}(0,L)$ of \eqref{FS}. By elliptic regularity theory, we know that $u \in C^1[0,L]$. Let $\gamma_i$ and $\pi_j$ be the increasing affine bijections from $\bar{I}_i$ and $\bar{J}_j$ onto $[0,1]$, respectively. Since $u$ is a positive weak solution of \eqref{P} in $I_1$ and a negative weak solution of \eqref{N} in $J_1$, by Theorem \ref{T1}, we have $u = c^+ \varphi_{p}^{a,b,+} \circ \gamma_1$ in $I_1$ and $u = c^- \varphi_{p}^{a,b,-} \circ \pi_1$ in $J_1$ for some positive constants $c^+$ and $c^-$. Noting that $u$ is of $C^1$ class at the intersection $\overline{I}_1 \cap \overline{J}_1$, then there is a constant $c > 0$ such that $c^+ = c\, l_\mu$ and $c^- = c\, l_\nu$. Repeating the above argument for all pairs of intervals $I_i$ and $J_j$ such that $\overline{I}_i \cap \overline{J}_j \neq \emptyset$, taking into account that $u \in C^1[0,L]$ and that $\{I_i\}^P_{i=1}$ and $\{J_j\}^N_{j=1}$ cover interspersedly the interval $[0,L]$, we easily deduce that $u = c\, l_\mu\, \varphi_{p}^{a,b,+} \circ \gamma_i$ in $I_i$ and $u =  c\, l_\nu\,  \varphi_{p}^{a,b,-} \circ \pi_j$ in $J_j$ for the same constant $c$ above, independent of $i$ and $j$. Therefore, it follows that $u = c \Phi_{(\mu, \nu), p, L}^{a,b}$ in $[0, L]$ and the proof is ended.
\end{proof}

\appendix
\section{An auxiliary result on glued solutions}

In this appendix we provide a proof of the following lemma:

\begin{lema} \label{L1} Let $(\alpha,\beta) \subset (0, L)$ be two nonempty open intervals, let $f : \R \rightarrow \R$ be a continuous function and let $u \in C^{1}[0,L]$ be a weak solution of

\begin{equation*}
\begin{cases}
\begin{aligned}
& - \Delta_{p}^{a,b} u = f(u) \ \text{in} \ (0,L), \\
& u(0) = u(L) = 0.
\end{aligned}
\end{cases}
\end{equation*}
If $u(\alpha) = 0$, $u'(\beta) = 0$ and $u' \geq 0$ in $(\alpha,\beta)$, then the function

\begin{equation*}
w(t) =
\begin{cases}
\begin{aligned}
     & u(t), \ \text{if} \ t \in [\alpha,\beta), \\
     & u(2\beta - t), \ \text{if} \ t \in [\beta,2\beta - \alpha]
\end{aligned}
\end{cases}
\end{equation*}
belongs to $C^1[\alpha, 2 \beta - \alpha]$ and is a weak solution of

\begin{equation*}
\begin{cases}
\begin{aligned}
& -a^{p}\Delta_{p} w = f(w) \ \text{in} \ (\alpha, 2\beta - \alpha), \\
& w(\alpha) = w(2\beta - \alpha) = 0.
\end{aligned}
\end{cases}
\end{equation*}
\end{lema}

\begin{proof} Clearly, we have $w \in C^1[\alpha, 2 \beta - \alpha]$. Take any $\varphi \in W^{1,p}_{0}(\alpha, 2\beta - \alpha)$ and consider the following functions for each $\varepsilon > 0$:

\begin{equation*}
\phi_{\varepsilon}(t) =
\begin{cases}
\begin{aligned}
     & \varphi(t) \ \ \text{if} \ t \in (\alpha, \beta - \varepsilon), \\
     & -\frac{\varphi(\beta - \varepsilon)}{\varepsilon}(t - \beta) \ \ \text{if} \ t \in [\beta - \varepsilon, \beta), \\
     & 0 \ \ \text{if} \ t \in \mathbb{R} \setminus (\alpha, \beta),
\end{aligned}
\end{cases}
\end{equation*}
\begin{equation*}
\psi_{\varepsilon}(t) =
\begin{cases}
\begin{aligned}
     & \varphi(2\beta - t) \ \ \text{if} \ t \in (\alpha, \beta - \varepsilon), \\
     & -\frac{\varphi(\beta + \varepsilon)}{\varepsilon}(t-\beta) \ \ \text{if} \ t \in [\beta - \varepsilon, \beta), \\
     & 0 \ \ \text{if} \ t \in \mathbb{R} \setminus (\alpha, \beta).
\end{aligned}
\end{cases}
\end{equation*}
It is easy to check that $\phi_{\varepsilon},\psi_{\varepsilon} \in W^{1,p}_0(0,L)$ and that

\begin{eqnarray*}
&& \lim_{\varepsilon \to 0^{+}} \phi_{\varepsilon}(t) + \psi_{\varepsilon}(2\beta - t) = \varphi(t) \ \text{for every} \ t \in (\alpha, 2\beta - \alpha),\\
&& \lim_{\varepsilon \to 0^{+}} \phi_{\varepsilon}'(t) - \psi_{\varepsilon}'(2\beta - t) = \varphi'(t) \ \text{for almost every} \ t \in (\alpha, 2\beta - \alpha).
\end{eqnarray*}
So, by the dominated convergence theorem, we derive

\begin{equation}\label{limu+1}
\lim_{\varepsilon \to 0^{+}} \int_{\alpha}^{2\beta - \alpha}f(w)\left(\phi_{\varepsilon}(t) + \psi_{\varepsilon}(2\beta - t)\right)\, dt = \int_{\alpha}^{2\beta - \alpha}f(w)\varphi \, dt.
\end{equation}
Also, since $-\Delta_{p}^{a,b}u = -a^{p}\Delta_{p}u$ in $(\alpha,\beta)$, we have

\[
\int_{\alpha}^{\beta} a^{p}|w'|^{p-2} w'\phi_{\varepsilon}' \, dt = \int_{\alpha}^{\beta - \varepsilon} a^{p}|u'|^{p-2}u'\varphi' \, dt + \int_{\beta - \varepsilon}^{\beta} a^{p}|u'|^{p-2}u'\left(-\frac{\varphi(\beta - \varepsilon)}{\varepsilon}\right) \, dt
\]
and since $u'$ is continuous in $[\alpha, \beta]$ and $u'(\beta) = 0$, we also get

\[
\left|\int_{\beta - \varepsilon}^{\beta} a^{p}|u'|^{p-2}u'\left(-\frac{\varphi(\beta - \varepsilon)}{\varepsilon}\right) \, dt\right| \leq a^{p} \lVert \varphi\rVert_{\infty}\lVert u' \rVert_{L^{\infty}[\beta - \varepsilon,\beta]}^{p-1} \xrightarrow[\varepsilon \to 0^{+}]{} 0.
\]
Consequently,

\begin{equation}\label{limu+2}
\lim_{\varepsilon \to 0^{+}} \int_{\alpha}^{\beta} a^{p} |w'|^{p-2} w'\phi_{\varepsilon}' \, dt = \int_{\alpha}^{\beta} a^{p} |w'|^{p-2} w'\varphi' \, dt.
\end{equation}
Now notice that

\begin{align*}
\int_{\beta}^{2\beta - \alpha} a^{p} |w'(t)|^{p-2} w'(t) \left(-\psi_{\varepsilon}'(2\beta - t)\right) \, dt &= a^{p} \int_{\beta}^{2\beta - \alpha} |u'(2\beta - t)|^{p-2}(-u'(2\beta - t))\left(-\psi_{\varepsilon}'(2\beta - t)\right) \, dt \\
&= a^{p}\int_{\alpha}^{\beta} |u'(t)|^{p-2} u'(t) \psi_{\varepsilon}'(t) \, dt \\
&= a^{p}\int_{\alpha}^{\beta - \varepsilon} |u'(t)|^{p-2} u'(t)\varphi'(2\beta - t) \, dt \\
&\ \ \ + a^{p}\int_{\beta - \varepsilon}^{\beta} |u'(t)|^{p-2} u'(t) \left(-\frac{\varphi(\beta + \varepsilon)}{\varepsilon}\right) \, dt \\
&=\int_{\beta}^{2\beta - \alpha} a^{p}|w'(t)|^{p-2} w'(t)\varphi'(t) \, dt \\
&\ \ \ + a^{p}\int_{\beta - \varepsilon}^{\beta} |u'(t)|^{p-2} u'(t)\left(-\frac{\varphi(\beta + \varepsilon)}{\varepsilon}\right) \, dt
\end{align*}
and, arguing as before, we have

\begin{equation}\label{limu+3}
\lim_{\varepsilon \to 0^{+}} \int_{\beta}^{2\beta - \alpha} a^{p} |w'(t)|^{p-2} w'(t)\left(-\psi_{\varepsilon}'(2\beta - t)\right) \, dt = \int_{\beta}^{2\beta - \alpha} a^{p} |w'|^{p-2} w'\varphi' \, dt.
\end{equation}
Using \eqref{limu+1}, \eqref{limu+2}, \eqref{limu+3} and the fact that $\phi_{\varepsilon}$ and $\psi_{\varepsilon}$ are test functions, we obtain

\begin{align*}
\int_{\alpha}^{2\beta - \alpha} f(w)\varphi \, dt &= \lim_{\varepsilon \to 0^{+}} \int_{\alpha}^{2\beta - \alpha} f(w) \left(\phi_{\varepsilon}(t) + \psi_{\varepsilon}(2\beta - t)\right) \, dt \\
&= \lim_{\varepsilon \to 0^{+}} \int_{\alpha}^{\beta}f(u)\phi_{\varepsilon} \, dt + \int_{\beta}^{2\beta - \alpha} f(u(2\beta - t))\psi_{\varepsilon}(2\beta - t) \, dt \\
&= \lim_{\varepsilon \to 0^{+}}\int_{\alpha}^{\beta} a^{p}|w'|^{p-2} w' \phi_{\varepsilon}' \, dt + \int_{\alpha}^{\beta} f(u) \psi_{\varepsilon} \, dt \\
&= \lim_{\varepsilon \to 0^{+}}\int_{\alpha}^{\beta} a^{p} |w'|^{p-2} w' \phi_{\varepsilon}' \, dt + \int_{\alpha}^{\beta} a^{p}|u'|^{p-2}u'\psi_{\varepsilon}' \, dt \\
&= \lim_{\varepsilon \to 0^{+}}\int_{\alpha}^{\beta} a^{p}|w'|^{p-2} w' \phi_{\varepsilon}' \, dt - \int_{\beta}^{2\beta - \alpha} a^{p} |w'(t)|^{p-2} w'(t) \psi_{\varepsilon}'(2\beta - t) \,dt \\
&= \int_{\alpha}^{\beta} a^{p}|w'|^{p-2} w'\varphi' \, dt + \int_{\beta}^{2\beta - \alpha} a^{p}|w'|^{p-2} w' \varphi' \, dt \\
&= \int_{\alpha}^{2\beta - \alpha} a^{p} |w'|^{p-2} w'\varphi' \, dt.
\end{align*}
\end{proof}

\subsection*{Acknowledgement}

The first author was partially supported by CAPES (PROEX 88887.712161/2022-00) and the second author was partially supported by CNPq (PQ 307432/2023-8, Universal 404374/2023-9) and Fapemig (Universal-APQ 01598-23).


\begin{thebibliography}{99}

\addcontentsline{toc}{section}{References}

\bibitem{A}
{A. Anane} - \textit{Etude des valeurs propres et de la résonnance pour l'opérateur $p$-laplacien}, Thèse, Université Libre de Bruxelles, 1987; C. R. Acad. Sci. Paris 305 (1987), 725-728.

\bibitem{AT}
{A. Anane, N. Tsouli} - \textit{On the second eigenvalue of the $p$-Laplacian}, in ``Nonlinear Partial Differential Equations'' (A. Benkirane and J.-P. Gossez, Eds.), Pitman Research Notes in Mathematics, Vol. 343, pp. 1-9, Longman, Harlow/New York, 1996.

\bibitem{AriC}
{M. Arias, J. Campos} - \textit{Radial Fu\v cík spectrum of the Laplace operator}, J. Math. Anal. Appl. 190 (1995), no. 3, 654-666.

\bibitem{AriCCuGo}
{M. Arias, J. Campos, M. Cuesta, J.-P. Gossez} - \textit{Asymmetric elliptic problems with indefinite weights}, Ann. Inst. H. Poincaré C Anal. Non Linéaire 19 (2002), no. 5, 581-616.

\bibitem{BWZ}
{T. Bartsch, Z-Q Wang, Z. Zhang} - \textit{On the Fu\v cik point spectrum for Schr\"{o}dinger operators on $\R^N$}, J. Fixed Point Theory Appl. 5 (2009), no. 2, 305-317.

\bibitem{BFK}
{M. Belloni, V. Ferone, B. Kawohl} - \textit{Isoperimetric inequalities, Wulff shape and related questions for strongly nonlinear elliptic operators}, Zeitschrift fur Angewandte Mathematik und Physik (ZAMP), 54, no. 5, 771-783, 2003.

\bibitem{CudeFG}
{M. Cuesta, D. de Figueiredo, J.-P. Gossez} - \textit{The beginning of the Fu\v cík spectrum for the $p$-Laplacian}, J. Differential Equations 159 (1999), no. 1, 212-238.

\bibitem{CuG}
{M. Cuesta, J.-P. Gossez} - \textit{A variational approach to nonresonance with respect to the Fu\v cík spectrum}, Nonlinear Anal. 19 (1992), no. 5, 487-500.

\bibitem{D}
{E. N. Dancer} - \textit{On the Dirichlet problem for weakly nonlinear elliptic partial differential equation}, Proc. Roy. Soc. Edinburgh 76 (1977), no. 1-2, 283-300.

\bibitem{DD}
{E. N. Dancer, Y. H. Du} - \textit{Competing species equations with diffusion, large interactions, and jumping nonlinearities}, J. Differential Equations 114 (1994), no. 2, 434-475.

\bibitem{deFG}
{D. de Figueiredo, J.-P. Gossez} - \textit{On the first curve of the Fu\v cík spectrum of an elliptic operator}, Differential Integral Equations 7 (1994), no. 5-6, 1285-1302.

\bibitem{dPDM}
{M. del Pino, P. Drábek, R. Manásevich} - \textit{The Fredholm alternative at the first eigenvalue for the one-dimensional $p$-Laplacian}, J. Differential Equations 151 (1999), no. 2, 386-419.

\bibitem{dPEM}
{M. del Pino, M. Elgueta, R. Manásevich} - \textit{A homotopic deformation along $p$ of a Leray-Schauder degree result and existence for $(|u'|^{p-2}u')'+f(t,u)=0,\, u(0)=u(T)=0,\, p>1$}, J. Differential Equations 80 (1989), no. 1, 1-13.

\bibitem{Dra}
{P. Drábek} - \textit{Solvability and bifurcations of nonlinear equations}, in Pitman Research Notes in Mathematics, Vol. 264, Longman, Harlow/New York, 1992.

\bibitem{DraGT}
{P. Drábek, P. Girg, P. Takác, M. Ulm} - \textit{The Fredholm alternative for the $p$-Laplacian: bifurcation from infinity, existence and multiplicity}, Indiana Univ. Math. J. 53 (2004), no. 2, 433-482.

\bibitem{DraS1}
{P. Drábek, S. B. Robinson} - \textit{Resonance problems for the one-dimensional $p$-Laplacian}, Proc. Amer. Math. Soc. 128 (2000), no. 3, 755-765.

\bibitem{DraS2}
{P. Drábek, S. B. Robinson} - \textit{A new and extended variational characterization of the Fu\v cík Spectrum with application to nonresonance and resonance problems}, Calc. Var. Partial Differential Equations (2018) 57, no. 1, 27 pp.

\bibitem{DraS3}
{P. Drábek, S. B. Robinson} - \textit{An extended variational characterization of the Fu\v cík spectrum for the $p$-Laplace operator}, Calc. Var. Partial Differential Equations (2020) 59, no. 2, 25 pp.

\bibitem{DraT}
{P. Drábek, S. P. Takác} - \textit{On variational eigenvalues of the $p$-Laplacian which are not of Ljusternik-Schnirelmann type}, J. Lond. Math. Soc. (2) 81 (2010), no. 3, 625-649.

\bibitem{F}
{S. Fu\v cík} - \textit{Solvability of Nonlinear Equations and Boundary Value Problems}, Reidel, Dordrecht, 1980.

\bibitem{KL}
{B. Kawohl, P. Lindqvist} - \textit{Positive eigenfunctions for the $p$-Laplace operator revisited}, Analysis (Munich) 26 (2006), no. 4, 545-550.

\bibitem{Ky}
{P. K. Kythe} - \textit{Fundamental solutions for differential operators and applications}, Birkh\"auser Boston, Inc., Boston, MA, 1996, 414 pp.

\bibitem{LL}
{C. Li, S. Li} - \textit{The Fu\v cík spectrum of Schrödinger operator and the existence of four solutions of Schr\"{o}dinger equations with jumping nonlinearities}, J. Differential Equations 263 (2017), no. 10, 7000-7097.

\bibitem{L}
{P. Lindqvist} - \textit{On the equation ${\rm div}(|\nabla u|^{p-2} \nabla u + \lambda |u|^{p-2} u = 0$}, Proc. Amer. Math. Soc. 109 (1990), no. 1, 157-164; addendum, Proc. Amer. Math. Soc. 116 (1992), no. 2, 583-584.

\bibitem{MaT}
{R. F. Manásevich, P. Takác} - \textit{On the Fredholm alternative for the $p$-Laplacian in one dimension}, Proc. London Math. Soc. (3) 84 (2002), no. 2, 324-342.

\bibitem{M}
{A. M. Micheletti} - \textit{A remark on the resonance set for a semilinear elliptic equation}, Proc. Roy. Soc. Edinburgh. 124 (1994), no. 4, 803-809.

\bibitem{S}
{M. Schechter} - \textit{The Fu\v cík spectrum}, Indiana Univ. Math. J. 43 (1994), no. 4, 1139-1157.

\bibitem{S1}
{M. Schechter} - \textit{Resonance problems with respect to the Fu\v cík spectrum}, Trans. Amer. Math. Soc. 352 (2000), no. 9, 4195-4205.

\bibitem{T}
{P. Takác} - \textit{On the Fredholm alternative for the $p$-Laplacian at the first eigenvalue}, Indiana Univ. Math. J. 51 (2002), no. 1, 187-237.

\bibitem{Tr}
{N. S. Trudinger} - \textit{On harnack type inequalities and their application to quasilinear elliptic equations}, Commun. Pure Appl. Math, 20, 1967, 721-747.
\end{thebibliography}
\end{document}